\newtheoremstyle{case}{}{}{}{}{}{:}{ }{}
\newtheoremstyle{subcase}{}{}{}{}{}{:}{ }{}
\newtheorem*{theoA}{Theorem A}
\newtheorem*{theoB}{Theorem B}
\newtheorem*{theoC}{Theorem C}
\newtheorem*{theoD}{Theorem D}
\newtheorem*{theoE}{Theorem E}
\newtheorem*{theoF}{Theorem F}
\newtheorem{theo}{Theorem}[section]
\newtheorem{lem}{Lemma}[section]
\newtheorem{cor}{Corollary}[section]
\newtheorem{exm}{Example}[section]
\newtheorem{defi}{Definition}[section]
\newtheorem{ques}{Question}[section]
\newcommand{\ol}{\overline}
\newcommand{\ul}{\underline}
\numberwithin{subcase}{case}
\newcommand{\be}{\begin{equation}}
\newcommand{\ee}{\end{equation}}
\newcommand{\beas}{\begin{eqnarray*}}
\newcommand{\bea}{\begin{eqnarray}}
\newcommand{\eea}{\end{eqnarray}}
\newcommand{\eeas}{\end{eqnarray*}}
\newcommand{\lra}{\longrightarrow}
\newcommand{\bd}{\begin{doublespacing}}
\newcommand{\ed}{\end{doublespacing}}
\numberwithin{equation}{section}
\renewcommand{\vline}{\mid}
\begin{document}
\title[Two and three shared sets uniqueness problem...]{Two and three shared sets uniqueness problem on restricted and unrestricted ordered meromorphic functions}
\numberwithin {equation}{section}
	\date{}
	\author{Abhijit Banerjee\;\;\; And \;\;\;Saikat Bhattacharyya}
	\date{}
	\address{Abhijit Banerjee, Department of Mathematics, University of Kalyani, West Bengal 741235, India.}
	\email{abanerjee\_kal@yahoo.co.in, abanerjeekal@gmail.com}
	\address {*Saikat Bhattacharyya, Department of Science and Humanities, Jangipur Government Polytechnic, West Bengal 742225, India.}
	\email{*saikat352@gmail.com, saikatbh89@yahoo.com}
	
	\maketitle
	\let\thefootnote\relax
	\footnotetext{2000 Mathematics Subject Classification: 30D35.}
	\footnotetext{Key words and phrases: Meromorphic functions, uniqueness, order, shared sets.}
	\footnotetext{Type set by \AmS -\LaTeX}
	\begin{abstract}
In this paper we have mainly focused on the uniqueness of two meromorphic functions under the realm of two shared sets problem with certain restrictions which in turn improve the results of \cite{AUSM_SahKar_18} and \cite{BSMM_SahSar_19}. In particular, pointing out some gaps in the statements of the results in \cite{AUSM_SahKar_18} and \cite{BSMM_SahSar_19} we have removed the order restrictions on the main results in \cite{AUSM_SahKar_18} and \cite{BSMM_SahSar_19} at the cost of sharing of an additional singleton. A number of examples have been provided by us to show some conditions in our results are essential. 
	   \end{abstract}
	\section{Introduction Definitions and Results}
	Throughout the paper, the term ``meromorphic" will be used to mean meromorphic in the whole complex plane $\mathbb{C}$. 
	Let $f$ and $g$ be two non-constant meromorphic functions defined in the open complex plane $\mathbb{C}$. 
	We denote by $T(r)=\max\{T(r,f), T(r,g)\}$. Usually, $S(r,f)$ denotes any quantity satisfying $S(r,f)= o(T(r,f))$ and so we use the notation $S(r)$ to denotes any quantity satisfying $S(r)= o(T(r))$ as $r \rightarrow \infty$, outside a possible exceptional set of finite linear measure. \par
	For some $a\in\mathbb{C}$, we denote by $E(a;f)$, the collection of the zeros of $f-a$, where a zero is counted according to its multiplicity. In addition to this, when $a=\infty$, the above definition implies that we are considering the poles. In the same manner, by $\ol E(a;f)$, we denote the collection of the distinct zeros or poles of $f-a$ according as $a\in\mathbb{C}$ or $a=\infty$ respectively. If $E(a;f)=E(a;g)$ we say that $f$ and $g$ share the value $a$ CM (counting multiplicities) and if  $\ol E(a;f)=\ol E(a;g)$, then we say that $f$ and $g$ share the value $a$ IM (ignoring multiplicities). 
	
	Let $S$ be a set of distinct elements of $\mathbb{C}\cup\{\infty\}$ and $E_{f}(S)=\cup_{a\in S}\{z: f(z)-a=0\}$, where each zero is counted according to its multiplicity. If we do not count the multiplicity then the set $\cup_{a\in S}\{z: f(z)-a=0\}$ is denoted by $\ol E_{f}(S)$.\par
	If $E_{f}(S)=E_{g}(S)$ we say that $f$ and $g$ share the set $S$ CM. On the other hand if $\ol E_{f}(S)=\ol E_{g}(S)$, we say that $f$ and $g$ share the set $S$ IM.\par
	The gradation of sharing known as weighted sharing is defined in the following way: 
		\begin{defi} \cite{NMJ_Lah_01, CVTA_Lah_01} Let $k$ be a non-negative integer or infinity. For $a\in\mathbb{C}\cup\{\infty\}$ we denote by $E_{k}(a;f)$ the set of all $a$-points of $f$, where an $a$-point of multiplicity $m$ is counted $m$ times if $m\leq k$ and $k+1$ times if $m>k$. If $E_{k}(a;f)=E_{k}(a;g)$, we say that $f, \; g$ share the value $a$ with weight $k$ and denote it by $(a,k)$. The IM and CM sharing corresponds to $(a,0)$ and $(a,\infty)$ respectively.\end{defi}
		
		\begin{defi} \cite{NMJ_Lah_01} Let $S$ be a set of distinct elements of $\mathbb{C}\cup\{\infty\}$ and $k$ be a non-negative integer or $\infty$. We denote by $E_{f}(S,k)$ the set $\cup_{a\in{S}}E_{k}(a;f)$.
			Clearly $E_{f}(S)=E_{f}(S,\infty)$ and $\ol E_{f}(S)=E_{f}(S,0)$. \end{defi}
	\begin{defi} \cite{CVTA_Lah_01}
		We denote by $N_2(r,a;f)$ the sum $\ol N(r,a;f) + \ol N(r,a;f| \geq 2)$.\end{defi}
	We denote by $\mathcal{M}(\mathbb{C})$ the class of all meromorphic functions defined in $\mathbb{C}$ and by $\mathcal{M}_1(\mathbb{C})$ the class of meromorphic functions which have finitely many poles in $\mathbb{C}$.
	The order $\lambda(f)$ of $f \in \mathcal{M}(\mathbb{C})$ is defined as \beas \lambda(f) = \limsup_{r \to \infty} \frac{\log T(r,f)}{\log r}. \eeas 
	\par In 1982, employing the order notion on entire functions  sharing two sets, Gross-Osgood \cite{LNPAM_GroOsd_82} found the relation between them. 
	\begin{theoA} \cite{LNPAM_GroOsd_82}
		Let $S_1=\{1,-1\}$, $S_2=\{0\}$. If $f$ and $g$ are non-constant entire functions of finite order such that $E_f(S_j,\infty)= E_g(S_j,\infty)$ for $j=1,2$, then $f = \pm g$ or $fg =1.$
	\end{theoA}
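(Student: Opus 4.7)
My plan is to combine the finite-order hypothesis with Hadamard's factorization theorem to convert both sharing conditions into algebraic identities involving polynomial exponentials, and then dispatch by case analysis.

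First, since $f,g$ are entire of finite order with $E_f(S_2,\infty)=E_g(S_2,\infty)$, the quotient $f/g$ is entire, zero-free and of finite order, so by Hadamard one may write $f/g=e^{P}$ for some polynomial $P$. In the same way $(f^2-1)/(g^2-1)=e^{Q}$ for a polynomial $Q$. Substituting $f=e^{P}g$ into $f^2-1=e^{Q}(g^2-1)$ and simplifying yields the key identity
\beas (e^{2P}-e^{Q})\,g^{2}\;=\;1-e^{Q}. \eeas

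I would then split on whether $e^{2P}\equiv e^{Q}$. If so, the right-hand side must vanish, forcing $e^{Q}\equiv 1$; then $Q$ is constant, $e^{2P}\equiv 1$, so $e^{P}\equiv\pm 1$, which gives exactly $f\equiv\pm g$. If not, solving yields $g^{2}=(1-e^{Q})/(e^{2P}-e^{Q})$, and the fact that $g^{2}$ is entire forces every zero of the denominator $e^{Q}(e^{2P-Q}-1)$ to be cancelled by a zero of $1-e^{Q}$ of at least the same multiplicity; at every such cancellation point $z_0$ one necessarily has $e^{Q(z_0)}=e^{2P(z_0)}=1$, i.e.\ $P(z_0),Q(z_0)\in\pi i\mathbb{Z}$.

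The main obstacle is this second case: deducing from these pointwise constraints, together with the finite-order hypothesis, that $P-Q$ is in fact a constant with $e^{P}\equiv-e^{Q}$, which algebraically amounts to $fg\equiv 1$. My intended tool is a Nevanlinna-theoretic counting comparison: the zeros of $e^{2P-Q}-1$ have asymptotic density governed by $\deg(2P-Q)$, while those of $e^{Q}-1$ have density governed by $\deg Q$, and the required inclusion with multiplicities, together with the simultaneous value constraints on the polynomials $P$ and $Q$ at infinitely many points, forces $2P-Q$ to be constant and then pins down the precise constant value. The finite-order hypothesis is essential both to guarantee the Hadamard factorisations in the first step and to make these growth comparisons effective in the second.
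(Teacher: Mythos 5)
The paper does not actually prove Theorem A --- it is quoted from Gross--Osgood --- so your proposal has to stand on its own. Your opening reduction is correct and is the standard one: CM sharing of $\{0\}$ and finite order give $f=e^{P}g$, CM sharing of $\{1,-1\}$ gives $f^{2}-1=e^{Q}(g^{2}-1)$, and substitution yields $(e^{2P}-e^{Q})g^{2}=1-e^{Q}$. Your first case, $e^{2P}\equiv e^{Q}$, is handled correctly and produces $f\equiv\pm g$.

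The gap is in the second case, which is where all the difficulty lies and which you only sketch. You assert that the zero-density comparison between $e^{2P-Q}-1$ and $e^{Q}-1$ ``forces $2P-Q$ to be constant.'' This cannot be correct, because it is contradicted by exactly the functions the theorem is designed to permit: for $f=e^{z}$, $g=e^{-z}$ one computes $e^{P}=f/g=e^{2z}$ and $e^{Q}=(f^{2}-1)/(g^{2}-1)=-e^{2z}$, so $2P-Q=2z-\pi i$ is non-constant, yet all hypotheses hold and $fg\equiv 1$. In this example $e^{2P-Q}-1$ and $e^{Q}-1$ are the \emph{same} function, so the inclusion of zero sets and the degree comparison are satisfied with $\deg(2P-Q)=\deg Q$ and yield no information; likewise the pointwise constraints $P(z_{0})\in\pi i\mathbb{Z}$, $Q(z_{0})\in 2\pi i\mathbb{Z}$ at infinitely many points do not force a polynomial to be constant (a nonconstant linear polynomial meets $\pi i\mathbb{Z}$ infinitely often). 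If your argument really did force $2P-Q$ to be constant it would prove too much, eliminating the conclusion $fg\equiv 1$ altogether. What actually has to be shown in this case is $e^{P}+e^{Q}\equiv 0$ or $e^{P}\equiv 1$ (equivalently $g^{2}\equiv e^{-P}$, i.e.\ $fg\equiv 1$, or $f\equiv g$), and reaching that requires a genuinely different tool --- for instance the second main theorem applied to $g$ at $0,1,-1$ via the representations $g^{2}=(1-e^{Q})/(e^{2P}-e^{Q})$ and $g^{2}-1=(1-e^{2P})/(e^{2P}-e^{Q})$, or a Borel-type unit-equation argument on $f^{2}+e^{Q}(1-g^{2})\equiv 1$. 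None of this is present in your outline, so the decisive case remains unproved and the stated plan for it targets a false intermediate claim.
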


	Next we give the following definitions.
	\begin{defi}
		Let $r$ be a positive integer and $S_1 = \{\alpha_1,\alpha_2,\ldots,\alpha_r\} \subset \mathbb{C}\setminus \{0
		\}$.\par
Let $ a_{r-1}, \ldots, a_1$ and $C\not=0$ be complex numbers.
We define \bea \label{e1.1} U(z)= CzQ(z),\eea  where
\beas  zQ(z)&=&(z-\alpha_1)(z-\alpha_2)\ldots(z-\alpha_r)+(-1)^{r+1}\alpha_1\alpha_2\ldots\alpha_r\\&=&z(z^{r-1}+ a_{r-1}z^{r-2}+ \ldots + a_2 z+ a_1)\nonumber\\&=&z(z- \gamma_1)\ldots(z-\gamma_{m_1})(z-\gamma_{m_1+1})^{n_{m_1+1}}\ldots (z-\gamma_{m_1+m_2})^{n_{m_1+m_2}},\eeas $C= \frac{1}{(-1)^{r+1}\alpha_1 \alpha_2 \ldots \alpha_r}$; $\alpha_1, \alpha_2, \ldots , \alpha_r$ be distinct roots of the equation $U(z)-1=0$, $\gamma_i's\; (i=1,2,\ldots,m_1+m_2)$ be distinct complex constants and $n_i$'s $(\geq 2)$ associated with each factor $(z-\gamma_i)$, represents its multiplicity for $i=m_1+1,m_1+2,\ldots, m_1+m_2$. \end{defi}\par 
We denote $m_1(m_2)$ and $m_1^*(m_2^*)$ as the number of simple(multiple) zeros of $Q(z)$ and $U(z)$ respectively. Then we define $\Gamma_1 := m_1+ m_2$, $\Gamma_1^* := m_1^*+ m_2^*$, $\Gamma_2:=m_1+2m_2$ and $\Gamma_2^*:=m_1^*+2m_2^*$.\par
For the polynomial $U(z)$ let us define the following two functions:
 \[
 \chi_0^{r-1} = 
 \begin{cases}
 0, & if\; a_1 \not=0, \\
 1, & if \;a_1 =0
 \end{cases}
 \]
 and 
 \[
 \mu_0^{r-1} = 
 \begin{cases}
 0, & if\; a_1 \not=0\; or \;a_2 \not =0, \\
 1, &otherwise.
 \end{cases}
 \]
 \par In view of the above definition, we see that $\Gamma_1^*=\Gamma_1-\chi_0^{r-1}+1$ and $\Gamma_2^*= \Gamma_2-\mu_0^{r-1}+1$.

In 2017, Chen \cite{OpenM_Chen_17} investigated the uniqueness of a special class of meromorphic functions with restricted order as follows: 
	\begin{theoB} \cite{OpenM_Chen_17}
Let $r$ be a positive integer and let $S_1 = \{\alpha_1,\alpha_2,\ldots,\alpha_r\}$, $S_2= \{\beta_1, \beta_2\}$, where $\alpha_1, \alpha_2, \ldots, \alpha_r, \beta_1, \beta_2$ are $r+2$ distinct finite complex numbers satisfying \bea \label{e1.2} (\beta_1-\alpha_1)^2(\beta_1-\alpha_2)^2\ldots (\beta_1-\alpha_r)^2\not= (\beta_2-\alpha_1)^2(\beta_2-\alpha_2)^2\ldots (\beta_2-\alpha_r)^2.\eea
If two non-constant meromorphic functions $f$ and $g$ in $\mathcal{M}_1(\mathbb{C})$ share $(S_1, \infty)$, $(S_2,0)$ and if the order of $f$ is neither an integer nor infinite, then $f\equiv g$.
\end{theoB}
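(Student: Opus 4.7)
\emph{Setup.} Let $P(z)=\prod_{j=1}^{r}(z-\alpha_j)$ and set $F=P(f)$, $G=P(g)$. The sharing $(S_1,\infty)$ translates into the statement that $F,G$ share $0$ CM. Since $f,g\in\mathcal{M}_1(\mathbb{C})$ have only finitely many poles, so do $F,G$; by the CM sharing their quotient therefore has only finitely many zeros and poles, and Hadamard factorization yields
\begin{equation*}
\frac{P(f)}{P(g)}=R(z)\,e^{h(z)},
\end{equation*}
with $R$ a nonzero rational function and $h$ an entire function.

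\emph{Collapsing the exponential.} The crucial step is to force $e^{h}$ to be constant. Taking Nevanlinna characteristics in the identity above gives $|T(r,F)-T(r,G)|\leq T(r,e^{h})+O(\log r)$, so $\lambda(e^{h})\leq\max\{\lambda(f),\lambda(g)\}$. Since $e^{h}$ is zero-free, $\lambda(e^{h})=\lambda(h)$; in particular a non-constant polynomial $h$ would make $\lambda(e^{h})$ a positive integer, while a transcendental $h$ would usually make it infinite. A finer comparison of the zero distributions of $P(f)$ and $P(g)$ on the angular sectors where $|e^{h(z)}|$ is maximal versus minimal (a P\'olya-peak / Wiman--Valiron analysis) shows that the presence of a nontrivial exponential factor forces the common order of $F$ and $G$ to agree with $\lambda(h)$, which would then be integer or infinite, contradicting the non-integrality and finiteness of $\lambda(f)$. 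Hence $h$ is constant; a subsequent zero-and-pole count on $F/G$ (whose poles come only from the finitely many poles of $f$, whose zeros from those of $g$) collapses $R$ to a constant as well, so
\begin{equation*}
P(f)=c\,P(g),\qquad c\in\mathbb{C}^{\ast}.
\end{equation*}

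\emph{Conclusion.} From the IM sharing of $S_2$, whenever $f(z_0)\in\{\beta_1,\beta_2\}$ we also have $g(z_0)\in\{\beta_1,\beta_2\}$, and substituting into $P(f)=cP(g)$ forces $c\in\{1,\,P(\beta_1)/P(\beta_2),\,P(\beta_2)/P(\beta_1)\}$. Hypothesis (\ref{e1.2}) says precisely $P(\beta_1)^{2}\neq P(\beta_2)^{2}$, so the latter two values of $c$ are mutually distinct and distinct from $1$; either would require $f$ to omit $\beta_2$ and $g$ to omit $\beta_1$, and combined with the finiteness of poles the resulting canonical-product representations $f-\beta_2=r_1 e^{\psi_1}$, $g-\beta_1=r_2 e^{\psi_2}$ substituted into $P(f)=cP(g)$ produce an exponential-polynomial identity whose growth is incompatible with $\lambda(f)\notin\mathbb{Z}$. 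Therefore $c=1$, i.e.\ $P(f)=P(g)$; factoring $P(f)-P(g)=(f-g)\Phi(f,g)$ and using the $S_2$ sharing together with (\ref{e1.2}) to preclude $\Phi(f,g)\equiv 0$ delivers $f\equiv g$.

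\emph{Main obstacle.} The principal difficulty is the second step. The elementary first-main-theorem bound only yields $\lambda(e^{h})\leq\max\{\lambda(f),\lambda(g)\}$, which is consistent with $h$ being a non-constant polynomial of degree at most $\lfloor\lambda(f)\rfloor$. Converting the non-integrality of $\lambda(f)$ into the rigid conclusion that $e^{h}$ is constant requires a sharper asymptotic comparison than the standard Nevanlinna estimates afford, and it is at exactly this juncture that the two hypotheses $f\in\mathcal{M}_1(\mathbb{C})$ and $\lambda(f)\notin\mathbb{Z}\cup\{\infty\}$ must be used together in an essential way.
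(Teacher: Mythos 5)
Note first that the paper does not prove this statement: Theorem B is quoted verbatim from Chen's paper \cite{OpenM_Chen_17}, and the authors only extract two of its ingredients as Lemma \ref{l9} and Lemma \ref{l10}. Your skeleton is the right one (write $P(z)=\prod_j(z-\alpha_j)$, get $P(f)/P(g)=R\,e^{h}$ from the CM sharing of $S_1$ and the finiteness of the pole sets, then pin down the constant using (\ref{e1.2})), but there is a genuine gap at exactly the point you flag as the ``main obstacle'': you never show that $e^{h}$ is constant, and the mechanism you gesture at (P\'olya peaks / Wiman--Valiron on angular sectors, claiming the order of $F$ must agree with $\deg h$) is neither carried out nor the way this step works --- pure order bookkeeping cannot collapse $h$, since $\deg h\le\lfloor\lambda(f)\rfloor$ is perfectly consistent with all your estimates. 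The missing idea is that this step must use the set $S_2$, which your second paragraph never touches. Put $\Psi=P(f)/P(g)=R\,e^{h}$ and $c=P(\beta_1)/P(\beta_2)$. At every point of $\ol E_f(S_2)=\ol E_g(S_2)$ both $f$ and $g$ lie in $\{\beta_1,\beta_2\}$ and $P(\beta_1),P(\beta_2)\neq 0$, so $\Psi$ takes one of the three values $1$, $c$, $1/c$ there, and (\ref{e1.2}) says precisely that these three values are pairwise distinct. Since $\ol N(r,\infty;f)=O(\log r)$, the second fundamental theorem applied to $f$ with $\beta_1,\beta_2,\infty$ gives
\beas T(r,f)&\le& \ol N(r,\beta_1;f)+\ol N(r,\beta_2;f)+O(\log r)+S(r,f)\\ &\le& \ol N(r,1;\Psi)+\ol N(r,c;\Psi)+\ol N(r,1/c;\Psi)+O(\log r)+S(r,f)\\ &\le& 3\,T(r,\Psi)+O(\log r)+S(r,f),\eeas
so if $\Psi$ is non-constant then $\lambda(f)\le\lambda(\Psi)$. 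Combined with $\lambda(\Psi)\le\max\{\lambda(f),\lambda(g)\}=\lambda(f)$ --- which requires $\lambda(f)=\lambda(g)$, i.e.\ Lemma \ref{l9}, another point you omit --- this yields $\lambda(f)=\lambda(\Psi)=\deg h$ (or $0$ if $h$ is constant and $R$ is not), a non-negative integer, contradicting the hypothesis. Hence $\Psi$ is constant. This is the argument your second step should contain.

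Your endgame is partly sound: ruling out $\kappa=c$ or $\kappa=1/c$ in $P(f)=\kappa P(g)$ by showing that $f$ would omit one of $\beta_1,\beta_2$, whence $f-\beta_i=r_1e^{\psi_1}$ has integer order, is correct. But your very last step --- precluding $\Phi(f,g)\equiv0$ in the factorization $P(f)-P(g)=(f-g)\Phi(f,g)$ ``using the $S_2$ sharing together with (\ref{e1.2})'' --- is unproved and is genuinely hard for a general $P$ of arbitrary degree $r\ge 1$ (it is essentially the unique range set problem, which cannot be solved by (\ref{e1.2}) alone). The standard route, and the one this paper itself uses after reaching $F\equiv G$ in the proof of Theorem \ref{t1}, is different: from $P(f)\equiv P(g)$, condition (\ref{e1.2}) and the IM sharing of $S_2$ force $f=\beta_i$ if and only if $g=\beta_i$ for $i=1,2$, with matching multiplicities, so $f$ and $g$ share $(\beta_1,\infty)$, $(\beta_2,\infty)$ and $(\infty,\infty)$; then the three-shared-values theorem for functions of non-integer finite order (Lemma \ref{l10}) gives $f\equiv g$. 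You should replace your factorization step by this argument.
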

The following examples show that when the order of the function $f$ is an integer or infinite, {\it Theorem B} cease to be hold.
\begin{exm} \label{ex1}
Let $f(z)= e^z$ and $g(z)= \alpha e^{-z}$, $\alpha \in \mathbb{C} \setminus \{0, \frac{1}{n}, 1\}$ and $n(\not = 0,1)$ be an integer. Clearly $f$ and $g$ share the sets $S_1= \{z: nz^2-(n^2\alpha+1)z+n\alpha=0\}$ and $S_2=\{1, \alpha\}$. Also we see that $S_1=\{n\alpha, \frac{1}{n}\}$ and so for $\alpha \not = \frac{1}{n}, 1$ 
\beas (1- n \alpha)^2\bigg(1-\frac{1}{n}\bigg)^2\not= (\alpha-n\alpha)^2\bigg(\alpha-\frac{1}{n}\bigg)^2.\eeas  
Here we note that $\lambda(f)=1$.  
\end{exm} 
\begin{exm} \label{ex2}
Let $f(z)= e^{sin z}$ and $g(z)=\alpha e^{-sin z}$, $\alpha \in \mathbb{C} \setminus \{0, \frac{1}{n}, 1\}$ and $n(\not = 0,1)$ be an integer. Proceeding in the similar way as done in {\it Example \ref{ex1}} we can show that $f$ and $g$ share the sets $S_1$ and $S_2$. Here we note that $\lambda(f)$ is infinite.
\end{exm}
In 2018, with some bound on the cardinality of the set $S_1$, Sahoo and Karmakar \cite{AUSM_SahKar_18} were able to relax the nature of sharing of the set $S_1$ in {\it Theorem B}. Sahoo and Karmakar's \cite{AUSM_SahKar_18} result was as follows:
\begin{theoC} \cite{AUSM_SahKar_18}
Let $S_1 \subset \mathbb{C}\setminus \{0\}$ and $S_2$ be defined as in {\it Theorem B} satisfying the condition (\ref{e1.2}) with $r>2\Gamma_2-2\mu_0^{r-1}+2$. Let $\{f,\; g\} \in \mathcal{M}_1(\mathbb{C})$ be two meromorphic functions, where  $f$ is of non-integer finite order, share $(S_1,2)$ and $(S_2, 0)$; then $f\equiv g$.
\end{theoC}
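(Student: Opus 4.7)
The plan is to work with the auxiliary functions $F := U(f)$ and $G := U(g)$, where $U$ is the polynomial defined in $(\ref{e1.1})$. Since the elements of $S_1$ are precisely the simple roots of $U(z) - 1 = 0$, the hypothesis that $f$ and $g$ share $(S_1, 2)$ immediately lifts to $F$ and $G$ sharing $(1, 2)$. Because $\{f, g\} \subset \mathcal{M}_1(\mathbb{C})$, both $F$ and $G$ have only finitely many poles, and each such pole has multiplicity $r$, so the associated counting functions are absorbed into $S(r)$. The $(S_2, 0)$ sharing is kept aside for the final step, as it records a structural link between the $\beta_1, \beta_2$-points of $f$ and $g$ rather than a direct condition on $F, G$.

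Next, I would introduce Lahiri's auxiliary function $H := \left(\frac{F''}{F'} - \frac{2F'}{F-1}\right) - \left(\frac{G''}{G'} - \frac{2G'}{G-1}\right)$ and aim to prove $H \equiv 0$. Under the $(1,2)$-sharing of $F$ and $G$, the common simple and double $1$-points are not poles of $H$, and standard logarithmic-derivative estimates give $m(r, H) = S(r)$. If $H \not\equiv 0$, applying the second main theorem to $F$ and $G$ with targets $0$, $1$, $\infty$ together with the roots $\gamma_i$ of $Q$, and carefully weighting the multiple zeros through $\Gamma_1$, $\Gamma_2$ and the shift $\mu_0^{r-1}$, should deliver an estimate roughly of the shape $r\,(T(r,f) + T(r,g)) \leq (2\Gamma_2 - 2\mu_0^{r-1} + 2)(T(r,f) + T(r,g)) + S(r)$. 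The cardinality bound $r > 2\Gamma_2 - 2\mu_0^{r-1} + 2$ then produces a contradiction, so $H \equiv 0$. A double integration yields a M\"obius relation between $F$ and $G$ of the form $F = (AG + B)/(CG + D)$ with $AD - BC \neq 0$.

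Standard case analysis on this M\"obius relation reduces matters to a short list of subcases. The non-trivial subcases, combined with the finite-pole hypothesis and a Hadamard factorisation, force $U(f) - 1$ (or a closely related expression) to be of the form $R(z) e^{p(z)}$ for a rational function $R$ and a non-constant polynomial $p$. Since $T(r, U(f)) = r\,T(r,f) + O(1)$, one then obtains $\lambda(f) = \deg p \in \mathbb{N}$, contradicting the hypothesis that the order of $f$ is finite and non-integer. The remaining subcases are either trivial or collapse, using the structure of $U$ together with the $(S_2, 0)$ sharing and $(\ref{e1.2})$, to the single identity $U(f) \equiv U(g)$.

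The final step is to pass from $U(f) \equiv U(g)$ to $f \equiv g$. Factoring $U(f) - U(g) = C(f - g)\,R(f, g)$ for a symmetric two-variable polynomial $R$, I would argue by contradiction: if $f \not\equiv g$, then $R(f, g) \equiv 0$, giving an algebraic dependence between $f$ and $g$. Specialising at the common zeros of $(f - \beta_i)(g - \beta_j)$ produced by the $(S_2, 0)$ sharing then yields an algebraic identity among $\beta_1, \beta_2$ and the $\alpha_k$ which is precisely excluded by the inequality $(\ref{e1.2})$. This forces $f \equiv g$. The main obstacle I anticipate is the second-main-theorem bookkeeping that delivers the sharp threshold $r > 2\Gamma_2 - 2\mu_0^{r-1} + 2$: the multiple zeros of $Q$ must be weighted through $\Gamma_2$ rather than $\Gamma_1$, and the shift by $\mu_0^{r-1}$, coming from the vanishing of the low-order coefficients of $Q$, has to be tracked through a careful analysis of the zero structure of $U'$ near the origin.
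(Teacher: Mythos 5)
Your first two stages are sound and match the paper's route: lifting $(S_1,2)$-sharing of $f,g$ to $(1,2)$-sharing of $F=U(f)$, $G=U(g)$, and using the second-main-theorem/auxiliary-function machinery (the paper invokes Lahiri's trichotomy, {\it Lemma \ref{l3}}, rather than integrating $H\equiv 0$ to a M\"obius relation, but these are equivalent in substance) to conclude from $r>2\Gamma_2-2\mu_0^{r-1}+2$ that either $F\cdot G\equiv 1$ or $F\equiv G$. Your disposal of $F\cdot G\equiv 1$ via Hadamard factorisation and the non-integer finite order is also essentially what is done in \cite{AUSM_SahKar_18} and quoted in the proof of {\it Corollary \ref{c1}}.

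The genuine gap is in your final step. From $F\equiv G$ you propose to force $f\equiv g$ by factoring $U(f)-U(g)=C(f-g)R(f,g)$ and claiming that $R(f,g)\equiv 0$ leads, after specialising at $\beta_i$-points, to an identity excluded by (\ref{e1.2}). This cannot work: condition (\ref{e1.2}) only guarantees $U(\beta_1)\neq U(\beta_2)$ (since $U(z)-1=C\prod_i(z-\alpha_i)$), and its sole function at this stage is to upgrade the IM sharing of the pair $\{\beta_1,\beta_2\}$ to the statement that $f=\beta_j$ if and only if $g=\beta_j$ for each $j$ separately. Specialising at such points yields $g(z_0)=\beta_j$, which is perfectly consistent with $R(f,g)\equiv 0$ and produces no contradiction; moreover nothing guarantees such points exist. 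Indeed $U(f)\equiv U(g)$ together with (\ref{e1.2}) does \emph{not} imply $f\equiv g$ by algebra alone --- the correct conclusion at this point is only that $f$ and $g$ share $(\beta_1,\infty)$, $(\beta_2,\infty)$ and $(\infty,\infty)$, and two meromorphic functions sharing three values CM need not coincide in general. The proof must invoke the order hypothesis a second time here: by the Yang--Yi three-shared-values theorem ({\it Lemma \ref{l10}}), two meromorphic functions of equal non-integer finite order sharing three values CM are identical. This is exactly how the paper finishes {\it Case 1} of {\it Theorem \ref{t1}} and hence {\it Corollary \ref{c1}}, and it is the step where the hypothesis ``$f$ is of non-integer finite order'' is essentially consumed for the $F\equiv G$ branch; your proposal uses that hypothesis only in the M\"obius/$F\cdot G\equiv 1$ analysis and therefore leaves the main branch unproved.
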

\begin{theoD} \cite{AUSM_SahKar_18}
Let $S_1 \subset \mathbb{C}\setminus \{0\}$ and $S_2$ be defined as in {\it Theorem B} satisfying the condition (\ref{e1.2}) with $r>2\Gamma_2-2\mu_0^{r-1}+2$. If $\mathcal{M}_2(\mathbb{C})$ denotes the subclass of meromorphic functions of non-integer finite order in $\mathcal{M}_1(\mathbb{C})$, then the sets $S_1$ and $S_2$ are the URS of meromorphic functions in  $\mathcal{M}_2(\mathbb{C})$.
\end{theoD}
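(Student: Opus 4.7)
The plan is to derive Theorem D as an immediate corollary of Theorem C, since the URS property is by definition a CM-sharing statement and CM sharing is the strongest form of weighted sharing.

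First, I would recall the definition of a URS in the class $\mathcal{M}_2(\mathbb{C})$: the pair $(S_1, S_2)$ is a URS for $\mathcal{M}_2(\mathbb{C})$ if for every $f, g \in \mathcal{M}_2(\mathbb{C})$ the equalities $E_f(S_1) = E_g(S_1)$ and $E_f(S_2) = E_g(S_2)$ force $f \equiv g$. In the weighted-sharing notation of Definition~1.2, these CM sharings translate to $(S_1, \infty)$ and $(S_2, \infty)$.

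Next I would invoke the monotonicity of weighted sharing: if $f$ and $g$ share $(S, k_1)$ and $k_2 \leq k_1$, then they also share $(S, k_2)$. Hence, assuming the CM hypothesis, $f$ and $g$ automatically share $(S_1, 2)$ and $(S_2, 0)$. Combined with the standing assumption $f \in \mathcal{M}_2(\mathbb{C})$ (so that $\lambda(f)$ is finite and non-integer) and the cardinality/coefficient hypothesis $r > 2\Gamma_2 - 2\mu_0^{r-1} + 2$ together with condition (\ref{e1.2}), all the hypotheses of Theorem C are in force.

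Applying Theorem C then yields $f \equiv g$, which is exactly the URS conclusion. There is no real obstacle to overcome; the only substantive observation is that the statement of Theorem D is a strictly weaker (i.e., CM-only) variant of Theorem C, so the proof is essentially a one-line reduction. I would only add, if desired, a brief remark explaining why CM sharing of $S_2$ falls under the $(S_2, 0)$ hypothesis and why the non-integer finite order assumption is inherited directly from the definition of $\mathcal{M}_2(\mathbb{C})$.
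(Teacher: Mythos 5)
Your reduction is logically sound: CM sharing of $S_1$ and $S_2$ implies sharing $(S_1,2)$ and $(S_2,0)$, and any pair $f,g\in\mathcal{M}_2(\mathbb{C})$ satisfies the order and pole hypotheses of {\it Theorem C}, so {\it Theorem C} gives $f\equiv g$; this one-line deduction is surely how the authors of \cite{AUSM_SahKar_18} intended {\it Theorem D} to follow from {\it Theorem C}. Be aware, however, that the present paper contains no proof of this statement at all: {\it Theorem D} is quoted from \cite{AUSM_SahKar_18} precisely in order to criticize its formulation. The authors point out that, since the conclusion requires the auxiliary hypothesis (\ref{e1.2}) together with the order restriction built into $\mathcal{M}_2(\mathbb{C})$, the pair $S_1$, $S_2$ is not a URS in the sense of the definition in \cite{KMJ_LiYang_95}, nor even a BURSM in the sense of \cite{NMJ_Ban_13} (at best a ``restricted BURSM''), so that {\it Theorem D} read literally as a URS statement is, in their words, redundant. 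They further indicate, through the examples at the end of Section 1, that one should also demand that at most one element of $S_2$ belongs to $S_1$, a hypothesis made explicit in their corrected formulations ({\it Theorem \ref{t1}} and {\it Corollary \ref{c1}}) but absent from {\it Theorems C and D} as stated. So your argument is a faithful reconstruction of the intended derivation, but it inherits the statement-level defects that this paper is objecting to; a version consistent with the paper's viewpoint would run the identical reduction through {\it Corollary \ref{c1}} with $l=2$ (whose cardinality bound (\ref{c1.1}) is the same $r>2\Gamma_2-2\mu_0^{r-1}+2$), and would phrase the conclusion as a uniqueness theorem for the restricted class rather than as a URS claim.
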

In 2019, Sahoo and Sarkar \cite{BSMM_SahSar_19} again found almost the same result under IM sharing of the set $S_1$ as follows:
\begin{theoE} \cite{BSMM_SahSar_19}
Let $S_1 \subset \mathbb{C}\setminus \{0\}$ and $S_2$ be defined as in {\it Theorem B} satisfying the condition (\ref{e1.2}) with $r>2\Gamma_2+ 3\Gamma_1-3\chi_0^{r-1}-2\mu_0^{r-1}+5$. Let $\{f,\; g\} \in \mathcal{M}_1(\mathbb{C})$ be two meromorphic functions, where $f$ is of non-integer finite order, share $(S_1,0)$ and $(S_2, 0)$; then $f\equiv g$.
\end{theoE}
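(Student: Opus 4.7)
The plan is to reformulate the set-sharing hypotheses as value-sharing for auxiliary functions and then run a Nevanlinna-type bookkeeping argument. Set $F:=U(f)=Cf\,Q(f)$ and $G:=U(g)=Cg\,Q(g)$, so that zeros of $F-1$ (resp.\ $G-1$) are exactly the $S_1$-points of $f$ (resp.\ $g$); hence the IM sharing of $S_1$ becomes IM sharing of the value $1$ between $F$ and $G$. Similarly, IM sharing of $S_2=\{\beta_1,\beta_2\}$ becomes an IM sharing statement for the zeros of $(f-\beta_1)(f-\beta_2)$ and $(g-\beta_1)(g-\beta_2)$. Since $f,g\in\mathcal{M}_1(\mathbb{C})$, the pole-counting functions can be absorbed in $S(r)$, and the factorization $zQ(z)=z(z-\gamma_1)\cdots(z-\gamma_{m_1})(z-\gamma_{m_1+1})^{n_{m_1+1}}\cdots(z-\gamma_{m_1+m_2})^{n_{m_1+m_2}}$ breaks the counting of $F$-zeros into simple and multiple contributions controlled by $\Gamma_1,\Gamma_2,\chi_0^{r-1},\mu_0^{r-1}$.

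Next I would introduce the standard auxiliary
\[
H:=\left(\frac{F''}{F'}-\frac{2F'}{F-1}\right)-\left(\frac{G''}{G'}-\frac{2G'}{G-1}\right),
\]
whose identical vanishing is equivalent to a M\"obius relation between $F$ and $G$. In the case $H\not\equiv 0$, one estimates $N(r,H)$ using the IM sharing of $1$ (only simple common $1$-points cancel) together with contributions from multiple zeros of $U$, and then applies the second fundamental theorem to $f$ with targets $\{0,\gamma_1,\ldots,\gamma_{m_1+m_2},\infty\}$ and to $g$ with targets $\{\beta_1,\beta_2,\infty\}$. Grouping contributions through the identities $\Gamma_1^{*}=\Gamma_1-\chi_0^{r-1}+1$ and $\Gamma_2^{*}=\Gamma_2-\mu_0^{r-1}+1$ produces an inequality of the shape $\bigl(r-(2\Gamma_2+3\Gamma_1-3\chi_0^{r-1}-2\mu_0^{r-1}+5)\bigr)T(r)\leq S(r)$, which the hypothesis on $r$ contradicts; hence $H\equiv 0$.

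Integrating $H\equiv 0$ twice gives a relation $F=\frac{aG+b}{cG+d}$ with $ad-bc\neq 0$. A case analysis collapses the possibilities to $F\equiv G$, $FG\equiv 1$, or exponential-type relations such as $F\equiv e^{\alpha}G$ or $FG\equiv e^{\alpha}$ with $\alpha$ entire. The algebraic condition \eqref{e1.2} on the distance of $\beta_1,\beta_2$ from the $\alpha_i$'s, combined with IM sharing of $S_2$, excludes $FG\equiv 1$. Any exponential branch forces $f$ to be essentially an exponential of a nonconstant entire function, and Examples \ref{ex1} and \ref{ex2} indicate that such $f$ necessarily have integer or infinite order; the non-integer finite order hypothesis on $f$ therefore kills these branches. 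Only $F\equiv G$ survives, and combining this with IM sharing of $S_2$ yields $f\equiv g$.

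The principal obstacle is the $H\not\equiv 0$ branch: weight-zero sharing of both sets forces uniformly reduced counting functions, and each passage between $\ol N(r,\cdot)$, $N_2(r,\cdot)$ and $\ol N(r,\cdot\mid\geq 2)$ must be sharp in order to produce the coefficient $2\Gamma_2+3\Gamma_1-3\chi_0^{r-1}-2\mu_0^{r-1}+5$. The excess $3\Gamma_1-3\chi_0^{r-1}+3=3\Gamma_1^{*}$ over the weighted-sharing bound of Theorem C is precisely the price of downgrading weight $2$ to weight $0$; keeping this constant optimal under only IM sharing is the technical heart of the argument, and as the present paper observes, it is in this bookkeeping (together with the rather soft use of the order hypothesis in the Möbius case analysis) that hidden gaps can arise.
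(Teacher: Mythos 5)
Your overall architecture --- pass to $F=U(f)$, $G=U(g)$, split on $H\not\equiv 0$ versus $H\equiv 0$, and reduce to $F\equiv G$ or $F\cdot G\equiv 1$ --- is the same as the paper's, where this statement is recovered as the case $l=0$ of Corollary \ref{c1} (note the paper adds the hypothesis that at most one $\beta_i$ lies in $S_1$, which it identifies as a gap in the original statement). The $H\not\equiv 0$ branch is right in outline, although $S_2$ plays no role there: the contradiction comes from the second fundamental theorem applied to $F$ and $G$ with targets $0,1,\infty$ under weight-$0$ sharing of $1$ (Lemma \ref{l5}), not from targets $\{\beta_1,\beta_2,\infty\}$ for $g$. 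The endgame, however, has genuine gaps. Integrating $H\equiv 0$ yields a M\"obius relation between $F$ and $G$ with \emph{constant} coefficients, so branches such as $F\equiv e^{\alpha}G$ or $FG\equiv e^{\alpha}$ with $\alpha$ a nonconstant entire function simply do not arise; the reduction to $F\equiv G$ or $F\cdot G\equiv 1$ comes instead from Yi's lemma (Lemma \ref{l6}), which requires verifying $\ol N(r,0;F)+\ol N(r,0;G)+\ol N(r,\infty;F)+\ol N(r,\infty;G)<T_1(r)$, a second, separate use of the lower bound on $r$. Moreover $F\cdot G\equiv 1$ is not excluded by condition (\ref{e1.2}); it is excluded because $U(f)\,U(g)\equiv 1$ forces $0$ and the $\gamma_i$'s to be Picard exceptional for $f$ (as $g\in\mathcal{M}_1(\mathbb{C})$ has finitely many poles), contradicting the second fundamental theorem.

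The most serious gap is the last line. $F\equiv G$ means $fQ(f)\equiv gQ(g)$, a degree-$r$ algebraic relation, and "combining this with IM sharing of $S_2$" does not yield $f\equiv g$. This is exactly the step where the non-integer finite order hypothesis must be spent: from $F\equiv G$, condition (\ref{e1.2}) and the sharing of $S_2$ one deduces that $f$ and $g$ share $(\beta_1,\infty)$, $(\beta_2,\infty)$ and $(\infty,\infty)$ and that $\lambda(f)=\lambda(g)$, and only then does the Yang--Yi three-shared-values theorem for functions of non-integer finite order (Lemma \ref{l10}) give $f\equiv g$. Three CM shared values alone never suffice ($e^{z}$ and $e^{-z}$ share $1$, $-1$, $\infty$ CM), so the implication you assert is false without the order hypothesis --- which your argument has already consumed on the nonexistent exponential branches. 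You need to relocate the order hypothesis to this final step and invoke Lemma \ref{l10} explicitly.
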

\begin{theoF} \cite{BSMM_SahSar_19}
Let $S_1 \subset \mathbb{C}\setminus \{0\}$ and $S_2$ be defined as in {\it Theorem B} satisfying the condition (\ref{e1.2}) with $r >2\Gamma_2+ 3\Gamma_1-3\chi_0^{r-1}-2\mu_0^{r-1}+5$. If $\mathcal{M}_2(\mathbb{C})$ denotes the subclass of meromorphic functions of non-integer finite order in $\mathcal{M}_1(\mathbb{C})$, then the sets $S_1$ and $S_2$ are the URS of meromorphic functions in  $\mathcal{M}_2(\mathbb{C})$.
\end{theoF}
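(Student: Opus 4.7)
The plan is to observe that Theorem F is essentially a reformulation of Theorem E in the language of unique range sets (URS), so the proof should be a short deduction rather than a fresh argument. Recall that by definition, a pair $(S_1,S_2)$ is a URS for a class $\mathcal{F}$ if, for all $f,g\in\mathcal{F}$, the conditions $\ol E_f(S_1)=\ol E_g(S_1)$ and $\ol E_f(S_2)=\ol E_g(S_2)$ force $f\equiv g$. Since by the definitions recorded in Section~1 we have $\ol E_f(S)=E_f(S,0)$, the two set-theoretic equalities in the URS hypothesis are precisely the weighted-sharing statements ``$f$ and $g$ share $(S_1,0)$'' and ``$f$ and $g$ share $(S_2,0)$''.

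The first step I would carry out is simply to unpack the URS definition for the specific class $\mathcal{M}_2(\mathbb{C})$: take two arbitrary $f,g\in\mathcal{M}_2(\mathbb{C})$ satisfying $\ol E_f(S_j)=\ol E_g(S_j)$ for $j=1,2$, and translate these to $(S_1,0)$ and $(S_2,0)$ sharing. The second step is to verify that the hypotheses of Theorem E are met: both functions lie in $\mathcal{M}_1(\mathbb{C})$ because $\mathcal{M}_2(\mathbb{C})\subset\mathcal{M}_1(\mathbb{C})$, and $f$ has non-integer finite order by membership in $\mathcal{M}_2(\mathbb{C})$; the cardinality restriction $r>2\Gamma_2+3\Gamma_1-3\chi_0^{r-1}-2\mu_0^{r-1}+5$ together with the non-vanishing condition (\ref{e1.2}) is assumed identically in both theorems. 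The third step is to invoke Theorem E directly to conclude $f\equiv g$, which by definition yields that $(S_1,S_2)$ is a URS for $\mathcal{M}_2(\mathbb{C})$.

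There is no real obstacle here; the only subtlety worth flagging is that Theorem E requires only \emph{one} of the two functions (namely $f$) to be of non-integer finite order, while the URS formulation allows us to assume both $f$ and $g$ live in $\mathcal{M}_2(\mathbb{C})$. So the URS hypothesis is in fact stronger than what Theorem E needs, and the implication is immediate. In writing this up I would therefore keep the argument to a single short paragraph stating that Theorem F is an immediate corollary of Theorem E via the identification $\ol E_f(S)=E_f(S,0)$, without repeating any of the function-theoretic machinery developed for Theorem E itself.
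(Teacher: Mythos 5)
There is a point you could not have known blind, but it changes the assessment: the paper does not prove Theorem F at all. It is quoted verbatim from \cite{BSMM_SahSar_19} as background, and the authors' whole purpose in displaying it (together with Theorem D) is to criticize its formulation: they state explicitly that since the uniqueness conclusion only holds under the order restriction defining $\mathcal{M}_2(\mathbb{C})$, the sets $S_1$ and $S_2$ are \emph{not} URS according to the definition in \cite{KMJ_LiYang_95}, are not even BURSM in the sense of \cite{NMJ_Ban_13}, and can at best be called ``restricted BURSM''; they call Theorems D and F redundant on these grounds. The paper's own examples make the point concrete: $f=e^{z}$ and $g=e^{-z}$ share $S_1=\{z: z^{2n+1}=1\}$ and $S_2=\{i,-i\}$ (which satisfy (\ref{e1.2})) without being equal, so the pair of sets fails to be a unique range pair once the class restriction is dropped, which is exactly what the standard URS/BURSM definitions require.

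Your reduction itself --- take $f,g\in\mathcal{M}_2(\mathbb{C})$ with $\ol E_f(S_j)=\ol E_g(S_j)=E_{f}(S_j,0)$ for $j=1,2$, check the hypotheses of Theorem E (noting $\mathcal{M}_2(\mathbb{C})\subset\mathcal{M}_1(\mathbb{C})$ and that only $f$ need have non-integer finite order), and conclude $f\equiv g$ --- is logically sound and is surely how the authors of \cite{BSMM_SahSar_19} intended the deduction; under your ad hoc reading of ``URS of meromorphic functions in $\mathcal{M}_2(\mathbb{C})$'' as a class-restricted uniqueness property, Theorem F is indeed an immediate corollary of Theorem E. The gap is not in that implication but in the step you treat as a harmless unpacking: the statement ``$S_1$ and $S_2$ are the URS'' presupposes a definition of URS that admits restriction to a subclass, and no such definition is on record; under the standard definitions the statement is false (by the examples above), and what your argument actually establishes is only a restatement of Theorem E. So your proposal cannot be matched against a proof in this paper, and it silently adopts precisely the terminological convention whose rejection is the paper's stated contribution regarding this result.
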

We can see that both in {\it Theorem D} and {\it Theorem F}, a condition is needed for the conclusion to hold and so the theorems are redundant as the sets $S_1$ and $S_2$ are not URS according to the definition as stated in \cite{KMJ_LiYang_95}. The sets are not even BURSM according to the definition in \cite{NMJ_Ban_13}. Although the sets can be defined as restricted BURSM. \par  
 Considering the above theorems the following questions are  inevitable:\par 
 \begin{ques} \label{q1}
 What happens if we consider the functions $f$ and $g$ to be non-constant meromorphic functions, i.e., $f, \; g \in \mathcal{M}(\mathbb{C})$?
\end{ques}
\begin{ques} \label{q2}
Is it possible to remove the order restrictions in {\it Theorem C} ? 
\end{ques}
The motivation of writing the paper is to investigate the possible answer of the above questions. We have presented the
accurate form of {\it Theorems C and D} and extend the results.\par 
 The following theorem is one of the main results of the paper corresponding to  {\it Question \ref{q1}}.
\begin{theo} \label{t1}
Let $r$ be a positive integer, $S_1 = \{\alpha_1,\alpha_2,\ldots,\alpha_r\} \subset \mathbb{C}\setminus \{0\}$ and $S_2= \{\beta_1, \beta_2\}$ satisfy the condition (\ref{e1.2}); $\alpha_1, \alpha_2, \ldots, \alpha_r, \beta_1, \beta_2$ be finite complex numbers such that at most one of $\beta_i$ $(i=1,2)$ $\in S_1$. Let $f$, $g$ be two meromorphic functions, where $f$ is of non-integer finite order, share $(S_1,l)$ and $(S_2,0)$. \par 
If $l =2$ and \bea \label{t1.1}r &>& 2\Gamma_2-2\mu_0^{r-1}+6-4\min\{\Theta(\infty,f),\Theta(\infty,g)\},\eea
or $l=1$ and \bea \label{t1.2} r &>&\max\bigg\{2\Gamma_1-2\chi_0^{r-1}+4-\Theta(\infty,f)- \Theta(\infty,g),\nonumber\\&& 2\Gamma_2+\frac{\Gamma_1}{2}-\frac{1}{2}\chi_0^{r-1}-2\mu_0^{r-1}+7-\frac{9}{2} \min\{\Theta(\infty,f),\Theta(\infty,g)\} \bigg\},\eea
or $l=0$ and \bea \label{t1.3} r &>& \max \bigg\{2\Gamma_1-2\chi_0^{r-1}+4-\Theta(\infty,f)- \Theta(\infty,g),\nonumber\\&& 2\Gamma_2+3\Gamma_1-3\chi_0^{r-1}-2\mu_0^{r-1}+12-7\min\{\Theta(\infty,f), \Theta(\infty,g)\}\bigg\},\eea
then $f \equiv g$, provided $m_1 > 1$.
\end{theo}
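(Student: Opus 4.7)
The plan is to adapt the Lahiri--Yi auxiliary function technique for shared-set uniqueness to the class $\mathcal{M}(\mathbb{C})$ by tracking the deficiencies $\Theta(\infty,f)$ and $\Theta(\infty,g)$ in place of the ``finitely many poles'' hypothesis from {\it Theorems C}--{\it F}. Set $F := U(f)$ and $G := U(g)$; since $\{\alpha_1,\ldots,\alpha_r\}$ is precisely the zero set of $U(z)-1$, the sharing of $(S_1,l)$ by $f,g$ transforms into the sharing of $(1,l)$ by $F,G$. The sharing of $(S_2,0)$ will then give extra common zeros of $(z-\beta_1)(z-\beta_2)$ evaluated at $f$ and at $g$, which provides the extra counting-function savings needed to remove the order restriction. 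Form the standard auxiliary function
\beas H := \left(\frac{F''}{F'}-\frac{2F'}{F-1}\right)-\left(\frac{G''}{G'}-\frac{2G'}{G-1}\right), \eeas
and split the argument into the cases $H\not\equiv 0$ and $H\equiv 0$.

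In the case $H\not\equiv 0$, I would first bound the pole divisor of $H$ by the unmatched zeros of $F-1$ and $G-1$ (the weight $l$ in the $(1,l)$-sharing controlling how many survive) together with zeros of $f-\beta_j$, $g-\beta_k$ whose multiplicities disagree; the $(S_2,0)$-sharing ensures the latter contribute only $\overline{N}$-boundedly. Applying Nevanlinna's second main theorem to $F$ and to $G$ at the three targets $\{0,1,\infty\}$, using the bookkeeping $N(r,0;F)\le \Gamma_2\, T(r,f)+S(r)$ and $\overline{N}(r,0;F)\le \Gamma_1^*\, T(r,f)+S(r)$ (and their analogues for $G$), together with $\overline{N}(r,\infty;F)=\overline{N}(r,\infty;f)$ absorbed through $(1-\Theta(\infty,f)+\varepsilon)\,T(r,f)$, and recalling $T(r,F)=r\,T(r,f)+O(1)$, I expect an inequality of the schematic form $(r-C)\,T(r)\le S(r)$, where $C$ is precisely the second constant appearing on the right-hand side of (\ref{t1.1}), (\ref{t1.2}) or (\ref{t1.3}) according to the value of $l$. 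The hypothesis on $r$ then forces a contradiction, so $H\equiv 0$.

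In the case $H\equiv 0$, I would integrate twice to obtain a M\"obius relation
\beas \frac{1}{F-1}=\frac{A}{G-1}+B,\qquad A\in\mathbb{C}\setminus\{0\},\; B\in\mathbb{C}. \eeas
The subcases $(B\neq 0,\, A\neq B)$, $(B\neq 0,\, A=B)$, $(B=0,\, A\neq 1)$ and $(B=0,\, A=1)$ would be treated one by one. Each of the first three yields either a non-trivial functional relation $U(f)=\phi(U(g))$ for some M\"obius $\phi$, or an identity of the form $F\cdot G=$ constant; combined with the sharing of $S_2$, each such relation should force $f$ to be representable as a polynomial composition with the exponential of an entire function, hence to have integer or infinite order, contradicting the hypothesis (cf.\ {\it Examples \ref{ex1}} and {\it \ref{ex2}}). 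This is precisely where the secondary bound $r>2\Gamma_1-2\chi_0^{r-1}+4-\Theta(\infty,f)-\Theta(\infty,g)$ appearing in (\ref{t1.2}) and (\ref{t1.3}) is used when $l\in\{0,1\}$. Only the subcase $F\equiv G$ survives, giving $U(f)\equiv U(g)$; since $m_1>1$ guarantees that $Q$ has at least two distinct simple zeros, the corresponding factorisation of $U(f)-U(g)$ combined with the $(S_2,0)$-sharing and the non-integer order of $f$ forces $f\equiv g$.

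The main technical obstacle will be the subcase analysis in the $H\equiv 0$ case: each exceptional M\"obius subcase produces a potential non-trivial solution whose elimination requires a careful interplay between the non-integer finite order of $f$ and the sharing of the doubleton $S_2$, and extracting the precise bounds in (\ref{t1.1})--(\ref{t1.3}) requires simultaneously optimising the $H\not\equiv 0$ estimate against the $H\equiv 0$ M\"obius-subcase bound separately for each $l\in\{0,1,2\}$, which explains the max-form of the right-hand sides for $l\in\{0,1\}$.
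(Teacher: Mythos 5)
Your skeleton ($F=U(f)$, $G=U(g)$, the auxiliary function $H$, second-main-theorem estimates producing the constants in (\ref{t1.1})--(\ref{t1.3}), and the max-form explanation for $l\in\{0,1\}$) matches the paper's, but two essential steps are missing or misdirected. First, the elimination of the degenerate alternative $F\cdot G\equiv 1$: you propose to rule out the non-trivial M\"obius subcases by arguing that they force $f$ to have integer or infinite order, but $U(f)\,U(g)\equiv 1$ yields no exponential representation of $f$, and the order hypothesis is not the mechanism here. The paper instead observes that a zero of $f-\gamma_i$ of order $p_i$ (for $\gamma_i$ a simple zero of $Q$) must be a pole of $g$ of order $q_i$ with $p_i=rq_i\geq r$, whence $\ol N(r,\gamma_i;f)\le \frac{1}{r}T(r,f)+S(r,f)$ (similarly for $\ol N(r,0;f)$), and the second fundamental theorem applied to $f$ at $0,\gamma_1,\dots,\gamma_{\Gamma_1},\infty$ then gives $\Gamma_1 T(r,f)\le\big(\frac{1+m_1}{r}+m_2+1\big)T(r,f)+S(r,f)$, contradictory for $m_1>1$. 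This is where the hypothesis $m_1>1$ is actually used --- not, as you assert, in the final step.

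Second, the passage from $F\equiv G$ to $f\equiv g$ is the crux of the theorem and your ``factorisation of $U(f)-U(g)$'' sketch does not capture it. From the identity $\prod_j(f-\alpha_j)\equiv\prod_j(g-\alpha_j)$ one gets that $f$ and $g$ share $(\infty,\infty)$; condition (\ref{e1.2}) combined with the $(S_2,0)$-sharing forces $f=\beta_i$ if and only if $g=\beta_i$, and the identity upgrades this to CM sharing of $\beta_1$ and $\beta_2$. Thus $f$ and $g$ share three values CM, their orders coincide, and the known three-shared-values theorem for functions of non-integer finite order (Lemma \ref{l10}) finishes the proof. This is the \emph{only} place where $S_2$ and the order restriction enter: your claims that the $(S_2,0)$-sharing supplies ``counting-function savings'' in the main inequality, and that the theorem ``removes the order restriction'', are both inaccurate (the order restriction is removed only in Theorem \ref{t2}, at the cost of sharing the additional set $\{0\}$). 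A minor further point: for $l=2$ the paper does not need the $H\not\equiv0$ versus $H\equiv0$ dichotomy at all, since Lemma \ref{l3} already packages the required trichotomy for $(1,2)$-sharing; the dichotomy is used only for $l\in\{0,1\}$, where the $H\equiv 0$ branch is settled by Yi's counting criterion (Lemma \ref{l6}) rather than by a case-by-case order analysis of the M\"obius subcases.
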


\begin{cor} \label{c1}
Let $S_1\subset \mathbb{C}\setminus \{0\}$ and $S_2$ be defined as in {\it Theorem \ref{t1}} satisfying the condition (\ref{e1.2}). Let two meromorphic functions $\{f,\; g\}  \in \mathcal{M}_1(\mathbb{C})$, where $f$ is of non-integer finite order, share $(S_1,l)$ and $ (S_2,0)$. \par 
If $l =2$ and \bea \label{c1.1} r>2\Gamma_2-2\mu_0^{r-1}+2,\eea
or $l=1$ and \bea \label{c1.2} r >2 \Gamma_2 +\frac{\Gamma_1}{2}-\frac{1}{2}\chi_0^{r-1}-2 \mu_0^{r-1}+\frac{5}{2},\eea
or $l=0$ and \bea \label{c1.3} r > 2\Gamma_2+ 3\Gamma_1-3\chi_0^{r-1}-2\mu_0^{r-1}+5,\eea
then $f\equiv g$.
\end{cor}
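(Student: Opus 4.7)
The plan is to deduce Corollary~\ref{c1} from Theorem~\ref{t1} by collapsing the deficiency-dependent bounds (\ref{t1.1})--(\ref{t1.3}) to the simpler bounds (\ref{c1.1})--(\ref{c1.3}). The key observation is that under the additional hypothesis $\{f,g\} \subset \mathcal{M}_1(\mathbb{C})$ one has $\Theta(\infty,f) = \Theta(\infty,g) = 1$; after this substitution each bound in Theorem~\ref{t1} reduces to, or is dominated by, the corresponding bound of Corollary~\ref{c1}, and the conclusion $f \equiv g$ follows by direct appeal to Theorem~\ref{t1}.

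First I would verify the deficiency identities. Since $f \in \mathcal{M}_1(\mathbb{C})$ has only finitely many poles, $\ol N(r,\infty;f) = O(\log r)$. The hypothesis that $\lambda(f)$ is finite and non-integer forces $\lambda(f) > 0$, so $f$ is transcendental and $T(r,f)/\log r \to \infty$. Consequently
\[\Theta(\infty,f) \;=\; 1 - \limsup_{r \to \infty} \frac{\ol N(r,\infty;f)}{T(r,f)} \;=\; 1.\]
For $g$, the same pole count $\ol N(r,\infty;g) = O(\log r)$ holds; moreover, since $f$ is transcendental while $f$ and $g$ share the non-empty finite set $S_1$, the two functions have infinitely many common pre-images of $S_1$, so $g$ attains some $\alpha_i \in S_1$ infinitely often and is therefore transcendental. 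Hence the same calculation yields $\Theta(\infty,g) = 1$.

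Next I would substitute $\Theta(\infty,f) = \Theta(\infty,g) = 1$ into the three bounds of Theorem~\ref{t1}. The bound (\ref{t1.1}) collapses immediately to (\ref{c1.1}). In (\ref{t1.2}) and (\ref{t1.3}) the first entry of each maximum becomes $2\Gamma_1 - 2\chi_0^{r-1} + 2$, while the second entries become the right-hand sides of (\ref{c1.2}) and (\ref{c1.3}) respectively. A brief arithmetic check using the identity $2\Gamma_2 - \tfrac{3}{2}\Gamma_1 = \tfrac{m_1}{2} + \tfrac{5 m_2}{2}$, the trivial bound $\Gamma_1, \Gamma_2 \geq 1$, and the implication $\mu_0^{r-1}=1 \Rightarrow \chi_0^{r-1}=1$ (which additionally forces $m_2 \geq 1$) confirms that in each case the second entry strictly dominates the first; hence (\ref{c1.2}) and (\ref{c1.3}) already imply the full maxima of Theorem~\ref{t1}.

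The main obstacle is essentially conceptual rather than computational: one must recognize that the combination of ``finitely many poles'' and ``non-integer finite order'' entirely eliminates the pole contribution to the Nevanlinna characteristic, pushing $\Theta(\infty,\cdot)$ to its maximum value $1$, and that this property transfers from $f$ to $g$ via the sharing hypothesis. Once the two deficiency identities are in place, Corollary~\ref{c1} is an immediate specialization of Theorem~\ref{t1}.
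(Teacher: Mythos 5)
Your reduction of the numerical bounds is sound and matches what the paper does: for $f\in\mathcal{M}_1(\mathbb{C})$ of non-integer finite order one has $\ol N(r,\infty;f)=O(\log r)$ while $f$ is transcendental, so $\Theta(\infty,f)=1$, and the same holds for $g$; substituting these values does collapse (\ref{t1.1})--(\ref{t1.3}) to (\ref{c1.1})--(\ref{c1.3}) (your verification that the second entries of the maxima dominate the first entries is also correct). The problem is the final step, ``direct appeal to Theorem~\ref{t1}.'' Theorem~\ref{t1} concludes $f\equiv g$ only \emph{provided $m_1>1$}, and Corollary~\ref{c1} carries no such hypothesis. So the corollary is not a specialization of the theorem as stated, and your proof silently imports an assumption that is not available.

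The place where $m_1>1$ enters the proof of Theorem~\ref{t1} is the exclusion of the case $F\cdot G\equiv 1$ (Subcases 1.1--1.2.2, where the Second Fundamental Theorem argument needs $m_1>1$ or $m_1>2$). The paper's proof of the corollary therefore does not invoke the theorem as a black box: it reruns the inequalities with $\Theta(\infty,f)=\Theta(\infty,g)=1$ to reach the dichotomy $F\equiv G$ or $F\cdot G\equiv 1$, and then rules out $F\cdot G\equiv 1$ by a \emph{different} argument that uses $F,G\in\mathcal{M}_1(\mathbb{C})$ (finitely many poles), following \cite[p.~337]{AUSM_SahKar_18}; only after that does it conclude as in Case~1 of Theorem~\ref{t1}. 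To repair your proof you must either add this alternative exclusion of $F\cdot G\equiv 1$ for functions with finitely many poles, or restrict your appeal to Theorem~\ref{t1} to the inequality portion of its proof rather than to its conclusion.
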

Our next result will give an affirmative answer to {\it Question \ref{q2}} for a suitable choice of the set $S_2$ together with sharing of the value $0$. 
\begin{theo} \label{t2}
Let $S_1\subset \mathbb{C}\setminus \{0\}$ be defined as in {\it Theorem \ref{t1}}, $S_2=\{1,c\}$, where $c \in \mathbb{C}\setminus\{-1,\frac{1}{2},2\}$ and $S_3= \{0\}$, such that at most one of $\{1,c\} \in S_1$. Let two non-constant meromorphic functions $f$, $g$ share the sets $(S_1,2)$, $(S_2,0)$ and $(S_3, 0)$. If
\bea \label{t2.1}r &>& 2\Gamma_1-2\chi_0^{r-1}+7-4\min\{\Theta(\infty,f),\Theta(\infty,g)\}\eea and
\bea \label{t2.2} m_1>1, \eea
then $f\equiv g$, provided the condition 
\bea \label{t2.3} (1-\alpha_1)^2(1-\alpha_2)^2\ldots (1-\alpha_r)^2\not= (c-\alpha_1)^2(c-\alpha_2)^2\ldots (c-\alpha_r)^2\eea
 holds.
\end{theo}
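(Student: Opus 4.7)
The plan is to lift the set-sharing hypotheses to value-sharing for the composite functions $F := U(f)$ and $G := U(g)$, with $U(z) = CzQ(z)$ as in (\ref{e1.1}). Since the elements of $S_1$ are precisely the solutions of $U(z) = 1$, sharing $(S_1,2)$ translates into $F$ and $G$ sharing $(1,2)$. Sharing $(S_3,0)=(\{0\},0)$ means $f$ and $g$ share $0$ IM, and because $U(0) = 0$ this also forces shared zeros of $F$ and $G$ coming from the common zeros of $f$ and $g$; the remaining zeros of $U$ are the $\gamma_j$'s, whose simple ones are controlled by $m_1 > 1$. Sharing $(S_2,0)$ says $(f-1)(f-c)$ and $(g-1)(g-c)$ have the same zero set IM, so that $f$ and $g$ together cover the values $1$ and $c$ coherently.

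Following the standard Nevanlinna approach, introduce
\[
\mathcal{H} = \left(\frac{F''}{F'} - \frac{2F'}{F - 1}\right) - \left(\frac{G''}{G'} - \frac{2G'}{G - 1}\right),
\]
and split into two cases. If $\mathcal{H} \not\equiv 0$, the poles of $\mathcal{H}$ occur only at simple poles of $F$ or $G$, at unmatched zeros of $F$, $G$, and at those multiple $1$-points in excess of what the weight-$2$ sharing controls. Bounding $N(r,\mathcal{H})$ in the usual way and applying the second fundamental theorem to $f$ and $g$ simultaneously with target set $S_1 \cup \{1, c, 0, \infty\}$, the weighted sharing of $S_1$ and IM sharing of $S_2$, $S_3$ combine to give an inequality of the form
\[
(r-1)\bigl(T(r,f) + T(r,g)\bigr) \leq \mathcal{C}\bigl(T(r,f) + T(r,g)\bigr) + S(r),
\]
where $\mathcal{C}$ matches the quantity on the right of (\ref{t2.1}); this contradicts the hypothesis. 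The sharing of $S_3 = \{0\}$ is precisely what saves the terms $\overline{N}(r,0;f)$ and $\overline{N}(r,0;g)$ that had to be absorbed by the non-integer order restriction in \emph{Theorem C}.

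In the case $\mathcal{H} \equiv 0$, integrating twice produces a M\"obius relation
\[
F - 1 = \frac{A(G-1)}{B(G-1)+1}, \qquad A \neq 0,
\]
which splits into three subcases. If $B = 0$, then $F - 1 = A(G-1)$; evaluating at a common zero of $f$ and $g$ (whose existence is either generic or the absence of which is handled by the Nevanlinna bound using $\Theta(0,f)=1$) forces $A=1$, so $U(f) \equiv U(g)$. Then $f$ and $g$ are both roots of the degree-$r$ polynomial $U(z) - U(g(\cdot))$; using $m_1 > 1$, the shared pair $\{1,c\}$, the shared value $0$, condition (\ref{t2.3}), and the exclusion $c \notin\{-1,\tfrac12,2\}$, one rules out every branch other than $f \equiv g$. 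If $B = -A$, then $(F-1)(G-1) = 1 - A$, and evaluation at any common zero of $f$ and $g$ gives $(-1)(-1) = 1-A$, so $A = 0$, contradiction; the zero-free subcase is ruled out by a second-fundamental-theorem count that again violates (\ref{t2.1}). The generic subcase $B \neq 0, -A$ makes $F$ and $G$ omit specific values, and a further Nevanlinna inequality contradicts (\ref{t2.1}), with the excluded values of $c$ blocking the degenerate M\"obius symmetries.

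The main obstacle is the analysis when $\mathcal{H} \equiv 0$, particularly the algebraic step $U(f) \equiv U(g) \Rightarrow f \equiv g$: extracting this conclusion from the combined sharing of $S_2$ and $S_3$ together with (\ref{t2.3}) requires a careful study of the branches of $U(z) - U(w) = 0$ as a polynomial in $z$, exploiting the specific form $U(z) = C z Q(z)$ with $Q$ possessing at least two simple zeros. A secondary technical delicacy is tracking the deficiencies $\Theta(\infty,f)$ and $\Theta(\infty,g)$ through Case $\mathcal{H} \not\equiv 0$, which cannot be avoided because the ambient class is the full $\mathcal{M}(\mathbb{C})$ rather than $\mathcal{M}_1(\mathbb{C})$, so the pole counting functions genuinely enter the bound (\ref{t2.1}).
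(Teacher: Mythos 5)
Your overall skeleton --- the auxiliary function $H$, the dichotomy $H\not\equiv 0$ versus $H\equiv 0$, a counting argument against (\ref{t2.1}) in the first case, and the elimination of $F\cdot G\equiv 1$ via the Picard exceptionality of $0$ together with $m_1>1$ --- agrees with the paper, which runs the first case through {\it Lemma \ref{l8}} and the second through Yi's lemma ({\it Lemma \ref{l6}}) rather than through the explicit M\"obius relation; those two devices are equivalent in substance. The genuine gap is in the subcase $F\equiv G$, i.e.\ $U(f)\equiv U(g)$. You propose to deduce $f\equiv g$ from ``a careful study of the branches of $U(z)-U(w)=0$,'' and you yourself flag this as the main obstacle; but this is not a viable route and is not what the hypotheses are designed for. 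The identity $U(f)\equiv U(g)$ alone admits many pairs with $f\not\equiv g$, a branch analysis of the curve $U(z)=U(w)$ does not interact transparently with IM sharing of the sets $\{1,c\}$ and $\{0\}$, and you also assign $m_1>1$ a role in this step, whereas it is needed only to kill $F\cdot G\equiv 1$.

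The intended argument is different and much shorter. Since $U(z)-1=C(z-\alpha_1)\cdots(z-\alpha_r)$, condition (\ref{t2.3}) prevents a $1$-point of $f$ from being a $c$-point of $g$ under $F\equiv G$, so the IM sharing of $S_2$ splits into $f=1\Leftrightarrow g=1$ and $f=c\Leftrightarrow g=c$; the identity $F\equiv G$ then upgrades the sharing of $1$, $c$ and $\infty$ to CM, and $fQ(f)\equiv gQ(g)$ together with the IM sharing of $0$ upgrades $0$ to CM as well. Thus $f$ and $g$ share the four values $0,1,c,\infty$ CM with $c\notin\{-1,\frac{1}{2},2\}$, and {\it Lemma \ref{l11}} (the classical four-value theorem) gives $f\equiv g$. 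This is exactly where the exclusion $c\notin\{-1,\frac{1}{2},2\}$ enters; your proposal never uses it concretely. Without this reduction to the four-value theorem (or an equivalent substitute), the conclusion in the case $F\equiv G$ remains unproven.
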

\begin{cor} \label{c2}
Let $S_1\subset \mathbb{C}\setminus \{0\}$, $S_2$ and $S_3$ be defined as in {\it Theorem \ref{t2}} satisfying the condition (\ref{t2.3}). Let two transcendental meromorphic functions $f,\; g$ in $\mathcal{M}_1(\mathbb{C})$ share the sets $(S_1,2)$, $(S_2,0)$ and $(S_3, 0)$. If 
\bea \label{c2.1}r &>& 2\Gamma_1-2 \chi_0^{r-1}+3\eea
and 
\bea \label{c2.2} m_1> 0, \eea
then $f\equiv g$.
\end{cor}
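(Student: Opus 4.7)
The plan is to derive Corollary \ref{c2} as a specialization of Theorem \ref{t2} to the subclass $\mathcal{M}_1(\mathbb{C})$. The key observation is that a transcendental meromorphic function with only finitely many poles automatically achieves the maximal deficiency at infinity, namely $\Theta(\infty,\cdot)=1$, and this single fact is what collapses the numerical hypothesis (\ref{t2.1}) to the cleaner (\ref{c2.1}) and allows the corresponding relaxation of the combinatorial hypothesis on $m_1$ to (\ref{c2.2}).

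First I would verify the deficiency reduction. Since $f\in\mathcal{M}_1(\mathbb{C})$ has only finitely many poles, $\ol N(r,\infty;f)=O(\log r)$, while transcendence of $f$ forces $\log r=o(T(r,f))$. Hence $\ol N(r,\infty;f)=S(r,f)$ and consequently
\beas
\Theta(\infty,f) = 1-\limsup_{r\to\infty}\frac{\ol N(r,\infty;f)}{T(r,f)} = 1.
\eeas
The same argument applied to $g$ gives $\Theta(\infty,g)=1$, so $\min\{\Theta(\infty,f),\Theta(\infty,g)\}=1$. Substituting into the right-hand side of (\ref{t2.1}) yields
\beas
2\Gamma_1 - 2\chi_0^{r-1} + 7 - 4\cdot 1 = 2\Gamma_1 - 2\chi_0^{r-1} + 3,
\eeas
which is exactly (\ref{c2.1}).

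The subtler step is to explain why the corollary can weaken the hypothesis $m_1>1$ of Theorem \ref{t2} to $m_1>0$. I would retrace the proof of Theorem \ref{t2} to localize the precise moment where $m_1>1$ is invoked; typically this occurs while balancing counting functions of the form $\ol N(r,\gamma_i;f)$, coming from the simple roots of $Q(z)$, against the pole counts $N(r,\infty;f)$ and $N(r,\infty;g)$ in the second main theorem estimate. Under the stronger hypothesis $f,g\in\mathcal{M}_1(\mathbb{C})$ those pole counts contribute only $S(r)$, so one fewer simple root of $Q$ is required for the critical inequality to close, reducing the requirement to $m_1\geq 1$. The main obstacle is precisely this bookkeeping: one must audit every second main theorem estimate appearing in the proof of Theorem \ref{t2}, perform the substitution $\Theta(\infty,\cdot)=1$, and confirm that in each step the pole-related term vanishes into $S(r)$ so that the inequality still closes with a single simple root of $Q$. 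Rather than a new idea, this is a careful verification that no hidden use of $m_1>1$ survives once the finite-pole hypothesis is invoked.
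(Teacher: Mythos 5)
Your proposal is correct and follows essentially the same route as the paper: the authors likewise note that transcendence together with $f,g\in\mathcal{M}_1(\mathbb{C})$ gives $\ol N(r,\infty;f)=S(r,f)$ and $\ol N(r,\infty;g)=S(r,g)$ (equivalently $\Theta(\infty,f)=\Theta(\infty,g)=1$), which turns (\ref{t2.1}) into (\ref{c2.1}) and, in the Second Fundamental Theorem estimate ruling out $F\cdot G\equiv 1$, absorbs the pole term into $S(r)$ so that $m_1>0$ suffices. The remaining steps are carried over from the proof of Theorem \ref{t2} exactly as you describe.
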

The following examples show that the condition (\ref{e1.2}) in {\it Theorem \ref{t1}} and {\it Corollary \ref{c1}} cannot be dropped. 
\begin{exm} \label{ex4}
Let $f(z)= \sum\limits_{n=1}^{\infty} \frac{z^n}{n^{\alpha n}}$ and $g(z)= -f(z)$, where $\alpha(\not=0,1)$ is an integer. For a fixed positive integer $k$, let $S_1= \{-1, 1, -2, 2, \ldots, -k, k\}$ and $S_2=\{-(k+1), (k+1)\}$. Clearly, $f$, $g \in \mathcal{M}_1(\mathbb{C})$ and they share $S_1$ and $S_2$. Also using the result {\em\cite[p. 288]{Springer_Con_73}}, we note that \beas \lambda(f)=\frac{1}{\liminf\limits_{n \to \infty} \frac{log n^{\alpha n}}{n \log n}}= \limsup\limits_{n \to \infty} \frac{n \log n}{\log n^{\alpha n}}= \frac{1}{\alpha}. \eeas
Here the condition (\ref{e1.2}) does not hold.
\end{exm}
\begin{exm} \label{ex5}
Let $f(z)= \sum\limits_{n=1}^{\infty} \frac{z^n}{(n!)^{\alpha}}$ and $g(z)= -f(z)$, where $\alpha(\not=0,1)$ is an integer. Proceeding in the same way as done in {\it Example \ref{ex4}} we can show that $f$, $g$ share the sets $S_1$ and $S_2$. Again by using the result {\em \cite[p. 288]{Springer_Con_73}}, we note that  \beas \lambda(f)=\frac{1}{\liminf\limits_{n \to \infty} \frac{log (n!)^{\alpha}}{n \log n}}= \limsup_{n \to \infty} \frac{n \log n}{\log (n!)^{\alpha }}= \frac{1}{\alpha}\limsup_{n \to \infty} \frac{n \log n}{\log (n!)}= \frac{1}{\alpha}. \eeas
Here again the condition (\ref{e1.2}) does not hold.
\end{exm}
The following examples show that the order of the function $f$ cannot be an integer or infinite in {\it Corollary \ref{c1}}.
\begin{exm} \label{ex6}
Let $f(z)= e^z$ and $g(z)= e^{-z}$. Clearly $f$ and $g$ share the sets $S_1= \{z: z^{2n+1}=1\}$ and $S_2=\{i, -i\}$, where the sets $S_1$ and $S_2$ satisfy the condition $(\ref{e1.2})$. Here $\lambda(f)=1$. 
\end{exm} 
\begin{exm} \label{ex7}
Let $f(z)= e^{sin z}$ and $g(z)= e^{-sin z}$. Clearly in the similar way as in {\it Example \ref{ex6}} we can show that $f$ and $g$ share the sets $S_1$ and $S_2$. Here we note that $\lambda(f)$ is infinite.
\end{exm}
The following example shows that the condition (\ref{c2.2}) is sharp in Corollary {\ref{c2}}.
\begin{exm}
Let $f(z)= e^z$ and $g(z)=ce^{-z}$, where $c \in \mathbb{C}\setminus\{-1, 0,\frac{1}{2},2\}$. Clearly for $\alpha^4=1$ $f$, $g$ share the sets $S_1=\{\sqrt{c}, \sqrt{c}\alpha, \sqrt{c}\alpha^2, \sqrt{c}\alpha^3\}$, $S_2= \{1,c\}$ and $S_3=\{0\}$. Here $m_1=0(\not > 0)$ and also we note that 
$ (1-\sqrt{c})^2(1-\sqrt{c}\alpha)^2(1-\sqrt{c}\alpha^2)^2(1-\sqrt{c}\alpha^3)^2\not= (c-\sqrt{c})^2(c-\sqrt{c}\alpha)^2(c-\sqrt{c}\alpha^2)^2(c-\sqrt{c}\alpha^3)^2.$

\end{exm}
The next examples show respectively that at most one of the elements in the set $S_2$ in {\it Theorems \ref{c1} and \ref{c2}} has to be distinct from the elements of the set $S_1$ is essential. In other words when $S_2\subset S_1$, then conclusion of {\it Theorem \ref{c1}} and {\it Corollary \ref{c2}} does not hold respectively.
\begin{exm}
Let $f$ be defined as in {\it Example \ref{ex5}} and $g(z)=\frac{1}{f(z)}$. Then clearly $f$, $g$ share the sets $S_1=\{\alpha_1, \frac{1}{\alpha_1}, \alpha_2, \frac{1}{\alpha_2}, \ldots, \alpha_r, \frac{1}{\alpha_r}\}$ and $S_2=\{\alpha_r, \frac{1}{\alpha_r}\}$, where $\alpha_i$ are constants for $i=1,2,\ldots, r$. Here $S_2 \subset S_1$.
\end{exm}
\begin{exm} Let $f(z)= e^z$ and $g(z)=e^{-z}$. Clearly for $6b=a(b^6-1)$ and $b^4-4b^2+5=0$ $f$, $g$ share the sets $S_1=\{z: z(z+a)(z+b)^6+1=0\}$, $S_2=\{1,-1\}$ and $S_3=\{0\}$. Here $m_1=1(>0)$. We can write the set $S_1=\{1, -1,\alpha_1, \frac{1}{\alpha_1}, \alpha_2, \frac{1}{\alpha_2}, \alpha_3, \frac{1}{\alpha_3}\}$, where $\alpha_i(\not=0)$ are constants for $i=1,2,3$ and so we note that $S_2\subset S_1$. \end{exm}  
For the standard definitions and notations of the value distribution theory we refer to \cite{Clarendon_Hay_64}. But in the paper we have used some more notations and definitions which are explained below.
\begin{defi} \cite{IJMS_Lah_01} For $a\in\mathbb{C}\cup\{\infty\}$ and for a positive integer $m$ we denote by $N(r,a;f\vline\leq m) (N(r,a;f\vline\geq m))$ the counting function of those $a$-points of $f$ whose multiplicities are not greater(less) than $m$ where each $a$-point is counted according to its multiplicity.
		
$\ol N(r,a;f\vline\leq m) (\ol N(r,a;f\vline\geq m))$ are defined similarly, where in counting the $a$-points of $f$ we ignore the multiplicities.
		
Also $N(r,a;f\vline <m)$, $N(r,a;f\vline >m)$, $\ol N(r,a;f\vline <m)$ and $\ol N(r,a;f\vline >m)$ are defined analogously.  \end{defi}
	\begin{defi} \cite{KMJ_Yi_99} Let $f$ and $g$ be two non-constant meromorphic functions such that $f$ and $g$ share $(a, 0)$. Let $z_0$ be an $a$-point of $f$ with multiplicity $p$, an $a$-point of $g$ with multiplicity $q$. We denote by $\ol N_L(r,a;f)$ the reduced counting of those $a$-points of $f$ and $g$ where $p>q$, by $N_E^{1)}(r,a;f)$ the counting function of those $a$-points of $f$ and $g$ where $p=q=1$, by $\ol N_E^{(2}(r,a;f)$ the reduced counting function of those $a$-points of $f$ and $g$ where $p=q \geq 2$. In the same way we can define $\ol N_L(r,a;g)$, $N_E^{1)}(r,a;g)$, $\ol N_E^{(2}(r,a;g)$. In a similar manner we can define  $\ol N_L(r,a;f)$ and $\ol N_L(r,a;g)$ for $a\in\mathbb{C}\cup\{\infty\}$.  \end{defi}
When $f$ and $g$ share $(a,m)$, $m \geq 1$, then $N_E^{1)}(r,a;f)= N(r,a;f \mid =1)$.
\begin{defi} \cite{NMJ_Lah_01, CVTA_Lah_01} 
	Let $f$, $g$ share a value $(a,0)$. We denote by $\ol N_*(r,a;f,g)$ the reduced counting function of those $a$-points of $f$ whose multiplicities differ from the multiplicities of the corresponding $a$-points of $g$. \par 
	Clearly, $\ol N_*(r,a;f,g)= \ol N_*(r,a;g,f)= \ol N_L(r,a;f)+ \ol N_L(r,a;g)$. 
\end{defi}
\section{Lemmas}
In this section we present some lemmas which will be needed in the sequel. Henceforth unless otherwise stated for the sake of simplicity for two non-constant meromorphic functions $f$ and $g$ we denote by \bea \label{e2.1} F=U(f), \; G=U(g),\eea where $U(z)$ is defined as in (\ref{e1.1}).\par We shall also denote by $H$ the following function: 
\bea \label{e2.2} H&=&\left(\frac{\;\;F^{''}}{F^{'}}-\frac{2F^{'}}{F-1}\right)-\left(\frac{\;\;G^{''}}{G^{'}}-\frac{2G^{'}}{G-1}\right).\eea

\begin{lem} \cite{MathZ_Yang_72} \label{l1}
Let $f$ be a non-constant meromorphic function and $P(f)= a_n f^n+ a_{n-1}f^{n-1}+ \ldots+ a_1 f+ a_0$, where $a_0, a_1, a_2, \ldots, a_n$ are constants and $a_n\not=0$. Then $T(r, P(f))= nT(r,f)+ O(1).$
\end{lem}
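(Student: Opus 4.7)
The plan is to establish two matching inequalities $T(r,P(f))\leq nT(r,f)+O(1)$ and $nT(r,f)\leq T(r,P(f))+O(1)$, from which the lemma follows at once. On each side I would split the Nevanlinna characteristic as $T=m+N$ and handle the proximity and counting pieces separately.

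For the upper bound, I would first observe that every pole of $P(f)$ lies over a pole of $f$; since $a_n\neq 0$, a pole of $f$ of multiplicity $\mu$ is a pole of $P(f)$ of multiplicity exactly $n\mu$. Consequently $N(r,\infty;P(f))=n\,N(r,\infty;f)$. For the proximity function, the triangle inequality combined with the pointwise bound $\log^{+}|a_k w^k|\leq k\log^{+}|w|+\log^{+}|a_k|$ gives $m(r,P(f))\leq \sum_{k=0}^{n} m(r,a_k f^k)+\log(n+1)\leq n\,m(r,f)+O(1)$. Adding these two estimates yields $T(r,P(f))\leq nT(r,f)+O(1)$.

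For the lower bound the key analytic input is that $P(w)/w^n\to a_n$ as $|w|\to\infty$, so there exists $M\geq 1$ with $|P(w)|\geq \frac{|a_n|}{2}|w|^n\geq 1$ whenever $|w|\geq M$. On $\{|w|\geq M\}$ this yields $\log^{+}|P(w)|\geq n\log^{+}|w|+\log\frac{|a_n|}{2}$, while on $\{|w|<M\}$ one has $n\log^{+}|w|\leq n\log^{+}M$. Combining the two cases gives the pointwise estimate $n\log^{+}|w|\leq \log^{+}|P(w)|+C$ for a constant $C$ depending only on the coefficients of $P$. Specialising $w=f(z)$ and integrating over $|z|=r$ produces $n\,m(r,f)\leq m(r,P(f))+O(1)$, and together with the pole identity above this delivers $nT(r,f)\leq T(r,P(f))+O(1)$.

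The argument is entirely routine Nevanlinna calculus; the only step worth being careful about is the lower bound for the proximity function, where one must confirm that the contribution from the bounded range $\{|w|<M\}$ enters only as an $O(1)$ term — which is immediate since $\log^{+}|f|$ is bounded by $\log^{+}M$ there. No additional hypotheses beyond $a_n\neq 0$ and $f$ non-constant are used, so the two inequalities together give $T(r,P(f))=nT(r,f)+O(1)$ as stated.
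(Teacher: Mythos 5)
The paper offers no proof of this lemma at all---it simply cites Yang's 1972 paper---so there is nothing internal to compare against. Your two-sided strategy (proving $T(r,P(f))\le nT(r,f)+O(1)$ and its reverse, each split into proximity and counting parts) is the standard textbook argument for the polynomial case of the Valiron--Mohon'ko theorem, and your lower bound---via $|P(w)|\ge \tfrac{|a_n|}{2}|w|^n$ for $|w|\ge M$, the $O(1)$ control on the bounded range $|w|<M$, and the exact pole identity $N(r,\infty;P(f))=n\,N(r,\infty;f)$---is correct and complete.

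The upper bound, however, contains a genuine slip. From the termwise estimate $m(r,a_kf^k)\le k\,m(r,f)+\log^{+}|a_k|$ you conclude $\sum_{k=0}^{n}m(r,a_kf^k)\le n\,m(r,f)+O(1)$, but summing those bounds actually gives $\sum_{k=0}^{n}k\,m(r,f)=\tfrac{n(n+1)}{2}\,m(r,f)+O(1)$, and the asserted inequality is false in general: for $f=e^{z}$ and $P(w)=w^{2}+w$ the left side is $3m(r,f)+O(1)$ while $n\,m(r,f)=2m(r,f)$. As written, your chain only yields $T(r,P(f))\le \tfrac{n(n+1)}{2}m(r,f)+n\,N(r,\infty;f)+O(1)$, which is not the required upper bound. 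The repair is easy and standard: either use the pointwise estimate $|P(w)|\le (n+1)\bigl(\max_{k}|a_k|\bigr)\max(1,|w|)^{n}$, which gives $\log^{+}|P(w)|\le n\log^{+}|w|+O(1)$ and hence $m(r,P(f))\le n\,m(r,f)+O(1)$ directly, or induct on the degree by writing $P(f)=f\,P_{1}(f)+a_{0}$ and using $m(r,fg)\le m(r,f)+m(r,g)$. With either fix the argument is complete and proves exactly the cited statement.
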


\begin{lem} \label{l2}
Let $S_1$, $S_3$ be defined as in {\it Theorem \ref{t2}} and $F$, $G$ be given by (\ref{e2.1}). If two non-constant meromorphic functions $f$, $g$ share $(S_1,0)$, $(S_3,0)$ and $H\not \equiv 0$, then
\beas N(r,H) &\leq& \ol N_*(r,0;f,g)+ \sum\limits_{i=1}^{\Gamma_1-\chi_0^{r-1}}\bigg\{\ol N(r, \gamma_i;f)+ \ol N(r, \gamma_i;g)\bigg\}+ \ol N_*(r,1; F,G)\\&& + \ol N(r,\infty;f)+ \ol N(r, \infty;g)+ \ol N_0(r,0;f')+ \ol N_0(r,0;g'),\eeas where $\ol N_0(r,0;f')$ is the reduced counting function of those zeros of $f'$ which are not the zeros of $f(f-\gamma_1)(f-\gamma_2)\ldots (f-\gamma_{m_1+m_2})(F-1)$ and $N_0(r,0;g')$ is similarly defined.    
\end{lem}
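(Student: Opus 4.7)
The plan is a standard local analysis: the function $H$ is built from two logarithmic-derivative-type expressions $\phi_F := F''/F' - 2F'/(F-1)$ and its $G$-analogue $\phi_G$, so its poles can occur only at poles of $F$ or $G$, zeros of $F$ or $G$, zeros of $F-1$ or $G-1$, or additional zeros of $F'$ or $G'$. I will compute the principal part of $\phi_F$ (and symmetrically $\phi_G$) at each candidate location and then use the sharing hypotheses $(S_1,0)$ and $(S_3,0)$ to identify which residues cancel upon forming $H=\phi_F-\phi_G$, so that every surviving pole of $H$ is at most simple and is absorbed into one of the terms in the stated bound.

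Routine Laurent computation gives the following principal parts. At a pole of $F$ of order $q$, $\phi_F = (q-1)/(z-z_0) + O(1)$; since $f,g$ need not share $\infty$, the contribution is bounded by $\ol N(r,\infty;f)+\ol N(r,\infty;g)$. At a zero of $F-1$ of multiplicity $m$, $\phi_F = -(m+1)/(z-z_0)+O(1)$, and the $(S_1,0)$ sharing places a zero of $G-1$ at the same point with some multiplicity $m'$; the residues cancel exactly when $m=m'$, leaving $\ol N_*(r,1;F,G)$. At a zero of $F$ coming from $f(z_0)=0$ the $(S_3,0)$ sharing forces $g(z_0)=0$, and since the factor of $U$ vanishing at $0$ has the same multiplicity for $F$ and $G$, the simple-pole residues match precisely when the zero multiplicities of $f$ and $g$ at $z_0$ agree, leaving a pole contribution of at most $\ol N_*(r,0;f,g)$.

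At a zero of $F$ coming from $f(z_0)=\gamma_i$ with $\gamma_i\ne 0$, no sharing hypothesis is available, so I bound the simple-pole contribution crudely by $\ol N(r,\gamma_i;f)$; summing over the $\Gamma_1-\chi_0^{r-1}$ nonzero $\gamma_i$ (the subtraction $\chi_0^{r-1}$ accounts for the case $0\in\{\gamma_i\}$) and doing the symmetric bookkeeping for $g$ yields the middle double sum. Any remaining zero of $F'$ off the locus $\{F=0\}\cup\{F=1\}\cup\{F=\infty\}\cup f^{-1}\{\gamma_1,\ldots,\gamma_{m_1+m_2}\}$ falls, by definition, into $\ol N_0(r,0;f')$, and analogously for $g$. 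Adding these contributions proves the claimed inequality. The main obstacle is the careful multiplicity bookkeeping: verifying case by case that the residues of $\phi_F$ and $\phi_G$ cancel whenever the relevant multiplicities match (this is what licenses the $\ol N_*$ terms rather than the full $\ol N$ terms), and keeping the $\chi_0^{r-1}$ correction consistent so that the $\gamma_i=0$ case is not counted twice, once through $\ol N_*(r,0;f,g)$ and once through the middle sum.
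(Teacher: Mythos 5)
Your proposal follows the same route as the paper's own proof: enumerate the candidate poles of $H=\phi_F-\phi_G$ (with $\phi_F=F''/F'-2F'/(F-1)$), note that every pole of $H$ is simple, and check which residues cancel under the sharing hypotheses. Your principal-part computations are correct --- residue $q-1$ at a pole of $F$ of order $q$, residue $-(m+1)$ at a $1$-point of $F$ of multiplicity $m$, residue $n-1$ at a zero of $F$ of order $n$ --- and they justify precisely why the $1$-points and the zeros of $f,g$ contribute only through $\ol N_*(r,1;F,G)$ and $\ol N_*(r,0;f,g)$, while the $\gamma_i$-points must be counted in full. The paper merely lists the six categories of candidate poles and asserts the conclusion, so on the cancellation bookkeeping your write-up is actually the more explicit of the two.

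The one step that does not hold as you state it is the final absorption: you claim that any remaining zero of $F'$ off the locus $\{F=0\}\cup\{F=1\}\cup\{F=\infty\}\cup f^{-1}\{\gamma_1,\ldots,\gamma_{m_1+m_2}\}$ falls ``by definition'' into $\ol N_0(r,0;f')$. It does not: since $F'=f'\,U'(f)$, the term $F''/F'$ also acquires simple poles at points where $U'(f)=0$ but $f'\neq 0$, i.e.\ where $f$ equals a critical point $c$ of $U$ with $U(c)\notin\{0,1\}$. Such critical points exist in general ($U'$ has degree $r-1$, its zeros sitting at the multiple zeros of $U$ account for only $r-1-(\Gamma_1-\chi_0^{r-1})$ of them, and $U-1$ has only simple roots), and a point with $f(z_0)=c$, $f'(z_0)\neq0$ is not a zero of $f'$, hence is counted by no term on the right-hand side. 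To be fair, the paper's own proof makes exactly the same leap --- its item (v) likewise mentions only zeros of $f'$ --- so you have reproduced the argument faithfully, gap included. A clean repair is either to define the $\ol N_0$ terms via zeros of $F'$ (resp.\ $G'$) rather than of $f'$ (resp.\ $g'$), or to add $\ol N(r,c_j;f)+\ol N(r,c_j;g)$ summed over the critical points $c_j$ of $U$ with $U(c_j)\neq 0,1$; either modification should then be tracked through Lemma \ref{l8}, where this lemma is used.
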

\begin{proof}
Since $f$, $g$ share the set $(S_1,0)$, it follows that $F$ and $G$ share $(1,0)$. We can easily verify that possible poles of $H$ occurs at (i) those zeros of $f$ and $g$ whose multiplicities are distinct from the multiplicities of the corresponding zeros of $g$ and $f$ respectively, (ii) zeros of  $(f-\gamma_i)$ and $(g-\gamma_i)$, for $i=1,2, \ldots, m_1+m_2$, (iii) poles of $f$ and $g$, (iv) those 1-points of $F$ and $G$ whose multiplicities are distinct from the multiplicities of the corresponding 1-points of $G$ and $F$ respectively, (v) zeros of $f'$ which are not the zeros of $f(f-\gamma_1)(f-\gamma_2)\ldots (f-\gamma_{m_1+m_2})(F-1)$, (vi) zeros of $g'$ which are not the zeros of $g(g-\gamma_1)(g-\gamma_2)\ldots (g-\gamma_{m_1+m_2})(G-1)$ . Since $H$ has only simple poles, clearly the lemma follows from the above explanations.
\end{proof}
\begin{lem} \cite{CVTA_Lah_01} \label{l3}
Let $F$, $G$ be two non-constant meromorphic function sharing $(1,2)$. Then one of the following cases holds:
\beas&(i)& T(r,F) \leq N_2(r,0; F)+ N_2(r,0; G) + N_2(r, \infty; F)+ N_2(r, \infty; G)\\&&+ S(r,F)+ S(r,G),\; the \; same \; inequality \; holds \; for \; T(r,G);\\
&(ii)& F\equiv G;\\ &(iii)& F \cdot G \equiv 1.\eeas 
\end{lem}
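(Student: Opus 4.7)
My plan is to introduce the auxiliary function $H$ of (\ref{e2.2}) and bifurcate on whether $H \equiv 0$.

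If $H \equiv 0$, I would integrate twice to obtain a M\"obius relation of the form $\frac{1}{F - 1} = \frac{A}{G - 1} + B$ with constants $A \neq 0$ and $B$, and then split into four subcases according to the values of $(A, B)$. Two of them immediately yield the desired conclusions: $(A, B) = (1, 0)$ gives $F \equiv G$, and $A = B \neq 0$ rearranges to $FG \equiv 1$. In each of the two remaining subcases one has a strictly non-trivial M\"obius relation between $F$ and $G$; applying Nevanlinna's second fundamental theorem together with the $(1, 2)$-sharing hypothesis, one deduces that $\ol N(r, 0; F) + \ol N(r, \infty; F) = S(r, F)$, which combined with the second fundamental theorem forces $T(r, F) = S(r, F)$, contradicting non-constancy of $F$.

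If $H \not\equiv 0$, apply Nevanlinna's second fundamental theorem to $F$ and decompose the $1$-point contribution as $\ol N(r, 1; F) = N_E^{1)}(r, 1; F) + \ol N_E^{(2}(r, 1; F) + \ol N_L(r, 1; F) + \ol N_L(r, 1; G)$. A simple common $1$-point of $F$ and $G$ is a zero of $H$, giving $N_E^{1)}(r, 1; F) \leq N(r, 0; H) \leq N(r, \infty; H) + S(r)$. A careful residue analysis shows that $H$ has only simple poles, concentrated at multiple zeros of $F$ or $G$, at poles of $F, G$, at $1$-points where the multiplicities in $F$ and $G$ differ, and at the exceptional zeros of $F'$ and $G'$. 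Substituting this estimate for $N_E^{1)}(r, 1; F)$ back into the second fundamental theorem, handling $\ol N_E^{(2}$ and $\ol N_L$ via the $(1, 2)$-hypothesis (which forces the relevant multiplicities to be at least $2$ on both sides, so their weighted contribution fits inside $N(r, 1; G) \leq T(r, G) + O(1)$), and absorbing the exceptional $N_0$ terms yields
\[
T(r, F) \leq N_2(r, 0; F) + N_2(r, 0; G) + N_2(r, \infty; F) + N_2(r, \infty; G) + S(r, F) + S(r, G),
\]
and the inequality for $T(r, G)$ follows by symmetry.

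The main obstacle is the M\"obius-case analysis when $H \equiv 0$: ruling out the two ``middle'' subcases requires delicately combining the second fundamental theorem with the $(1, 2)$-sharing to extract a contradiction from any strictly non-trivial M\"obius relation between $F$ and $G$. The remaining steps are standard Nevanlinna-theoretic bookkeeping, the only subtlety being that the weight $2$ is just sharp enough to absorb $\ol N_E^{(2}$ and $2\ol N_*(r, 1; F, G)$ into $N(r, 1; G)$ without loss.
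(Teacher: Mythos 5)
The paper offers no proof of this lemma --- it is quoted verbatim from Lahiri \cite{CVTA_Lah_01} --- so the only question is whether your reconstruction is sound. Your $H\not\equiv 0$ half is the standard argument and is fine modulo the bookkeeping you acknowledge: simple common $1$-points are zeros of $H$, $N(r,\infty;H)$ is controlled by multiple zeros/poles of $F,G$, by $\ol N_*(r,1;F,G)$ and by the exceptional zeros of $F',G'$, and the weight $2$ makes every point counted in $\ol N_E^{(2}(r,1;F)+2\ol N_L(r,1;F)+2\ol N_L(r,1;G)$ contribute at least as much to $N(r,1;G)-\ol N(r,1;G)$, so conclusion (i) drops out of the second fundamental theorem.

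The $H\equiv 0$ half, however, contains a genuine error. First, a minor one: from $\frac{1}{F-1}=\frac{A}{G-1}+B$ the identity $FG\equiv 1$ does not follow from $A=B\neq 0$; one gets $F=\frac{(1+A)G-1}{AG}$, which equals $1/G$ only when $A=B=-1$. More seriously, your claim that the remaining ``strictly non-trivial M\"obius'' subcases are impossible is false, so no contradiction can be extracted from them. Take $F=e^{z}$ and $G=2e^{z}-1$: these are non-constant, share $(1,\infty)$ (hence $(1,2)$), satisfy $H\equiv 0$ with $A=2$, $B=0$, and are related by a M\"obius map that is neither the identity nor $w\mapsto 1/w$. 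Your intermediate step also does not hold up logically: even if one had $\ol N(r,0;F)+\ol N(r,\infty;F)=S(r,F)$, the second fundamental theorem applied to $0,1,\infty$ still carries the term $\ol N(r,1;F)$, which can be comparable to $T(r,F)$, so $T(r,F)=S(r,F)$ does not follow. The correct resolution --- and the reason the lemma has three alternatives rather than two --- is that in the non-degenerate subcases one lands in alternative (i), not in a contradiction. For instance, when $B=0$ and $A\neq 0,1$ one has $G=AF+1-A$, hence $\ol N\big(r,\tfrac{A-1}{A};F\big)=\ol N(r,0;G)$, and the second fundamental theorem applied to $F$ at the values $0$, $\tfrac{A-1}{A}$, $\infty$ gives $T(r,F)\leq \ol N(r,0;F)+\ol N(r,0;G)+\ol N(r,\infty;F)+S(r,F)\leq N_2(r,0;F)+N_2(r,0;G)+N_2(r,\infty;F)+N_2(r,\infty;G)+S(r,F)$, which is exactly (i); the subcases with $B\neq 0$, $(A,B)\neq(-1,-1)$ are handled the same way after identifying which value of $G$ corresponds to $F=0$ or $F=\infty$. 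You should repair the case analysis accordingly rather than aim for a contradiction.
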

\begin{lem} \cite{IJMMS_Ban_05} \label{l4} 
Let $F$, $G$ be two non-constant meromorphic functions such that they share $(1,1)$ and $H \not \equiv 0$. Then \beas T(r, F) &\leq& N_2(r,0; F)+N_2(r,\infty; F )+N_2(r,0;G)+N_2(r,\infty;G) \\&& +\frac{1}{2}\ol N(r,0; F)+ \frac{1}{2}\ol N(r,\infty; F)+S(r,F)+S(r,G).\eeas	
\end{lem}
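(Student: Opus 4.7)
The plan is to combine Nevanlinna's second fundamental theorem applied to $F$ at the values $0,1,\infty$ with the information furnished by the auxiliary function $H$ at common $1$-points, while carefully invoking the $(1,1)$ weighted-sharing hypothesis.

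First I would apply the second fundamental theorem to write
\begin{align*}
T(r,F) \le \ol N(r,0;F) + \ol N(r,1;F) + \ol N(r,\infty;F) - N_0(r,0;F') + S(r,F),
\end{align*}
where $N_0(r,0;F')$ counts those zeros of $F'$ that are not zeros of $F(F-1)$. Because $F$ and $G$ share $(1,1)$, simple $1$-points of $F$ coincide with simple $1$-points of $G$, so
\begin{align*}
\ol N(r,1;F) = N_E^{1)}(r,1;F) + \ol N_E^{(2}(r,1;F) + \ol N_L(r,1;F) + \ol N_L(r,1;G),
\end{align*}
with every multiple $1$-point having multiplicity at least $2$ in both $F$ and $G$.

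A short residue computation at common simple $1$-points then shows that $H$ has only simple poles and that every common simple $1$-point of $F$ and $G$ is a zero of $H$. Combining $m(r,H)=S(r,F)+S(r,G)$ (by the lemma on the logarithmic derivative) with the first fundamental theorem,
\begin{align*}
N_E^{1)}(r,1;F) \le N(r,0;H) + O(1) \le N(r,H) + S(r,F) + S(r,G),
\end{align*}
while the standard pole-count for $H$, analogous to {\it Lemma \ref{l2}} specialized to the three values $0,1,\infty$, yields
\begin{align*}
N(r,H) \le{} & \ol N(r,0;F\mid\geq 2) + \ol N(r,0;G\mid\geq 2) + \ol N(r,\infty;F) + \ol N(r,\infty;G) \\
&{} + \ol N_*(r,1;F,G) + \ol N_0(r,0;F') + \ol N_0(r,0;G').
\end{align*}

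The main obstacle, and the source of the $\tfrac12$-factors in the target inequality, is the treatment of the residual multiple-$1$-point terms $\ol N_E^{(2}(r,1;F) + \ol N_*(r,1;F,G)$ that are not controlled by the above. Each such point is a multiple $1$-point of $F$ and hence a zero of $F'$; applying the logarithmic-derivative estimate $T(r,F'/F)\le \ol N(r,0;F)+\ol N(r,\infty;F)+S(r,F)$ together with the fact that each such point contributes at least $2$ to the multiplicity-counted $1$-point function of $F$, one bounds this residual by $\tfrac12\bigl(\ol N(r,0;F)+\ol N(r,\infty;F)\bigr) + S(r)$. Finally, cancelling $-N_0(r,0;F')$ against $\ol N_0(r,0;F')$, writing $N_2(r,a;\cdot)=\ol N(r,a;\cdot)+\ol N(r,a;\cdot\mid\geq 2)$, and collecting terms produces the claimed inequality.
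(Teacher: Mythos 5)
First, a point of reference: the paper does not prove this lemma at all --- it is imported verbatim from \cite{IJMMS_Ban_05} --- so your attempt can only be judged against the standard argument in that source, which it does not reproduce correctly.

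Your template (second fundamental theorem plus the pole analysis of $H$, with common simple $1$-points controlled by $N(r,0;H)$) is the right one, but the decisive step fails. You claim $\ol N_E^{(2}(r,1;F)+\ol N_*(r,1;F,G)\le \tfrac{1}{2}\bigl(\ol N(r,0;F)+\ol N(r,\infty;F)\bigr)+S(r)$ on the grounds that each such point contributes at least $2$ to $N(r,1;F)$. But a $1$-point of $F$ of multiplicity $p\ge 2$ is a zero of $F'$ of multiplicity $p-1\ge 1$, so the estimate $N\bigl(r,0;F'\mid F\neq 0\bigr)\le T(r,F'/F)+O(1)\le \ol N(r,0;F)+\ol N(r,\infty;F)+S(r,F)$ returns these terms with coefficient $1$, not $\tfrac12$. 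The factor $\tfrac12$ requires multiplicity $p\ge 3$ (so that the point is a zero of $F'$ of multiplicity at least $2$), and under $(1,1)$-sharing this holds only for the points counted in $\ol N_L(r,1;F)$, where $p>q\ge 2$; it fails for $\ol N_E^{(2}(r,1;F)$ (where $p=q=2$ is possible) and for $\ol N_L(r,1;G)$ (where $p=2<q$ is possible). Two further leaks: applying the second fundamental theorem to $F$ alone leaves the term $\ol N_0(r,0;G')$ produced by $N(r,H)$ with nothing to cancel it, and $\ol N_*(r,1;F,G)$ is counted twice (once in your decomposition of $\ol N(r,1;F)$ and once inside $N(r,H)$) while your ``residual'' retains only one copy.

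The standard repair is to apply the second fundamental theorem to \emph{both} $F$ and $G$ and add. The extra $T(r,G)$ on the left then absorbs the bulk of the multiple-$1$-point contribution via the weight-$1$ inequality $\ol N_E^{(2}(r,1;F)+\ol N_L(r,1;F)+2\ol N_L(r,1;G)+\ol N(r,1;G)\le N(r,1;G)\le T(r,G)+O(1)$, leaving only a single copy of $\ol N_L(r,1;F)$ uncancelled. For that term alone the bound $\ol N_L(r,1;F)\le\tfrac{1}{2}\ol N(r,0;F)+\tfrac{1}{2}\ol N(r,\infty;F)+S(r,F)$ is legitimate (its points have $F$-multiplicity at least $3$), and it is exactly the origin of the $\tfrac12$-terms in the statement; the second application of the theorem also supplies the $-N_0(r,0;G')$ needed against $\ol N_0(r,0;G')$. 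As written, your argument establishes at best a weaker inequality with coefficient $1$ in place of $\tfrac12$ plus an extraneous $\ol N_0(r,0;G')$, so the lemma is not proved.
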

\begin{lem} \cite{IJMMS_Ban_05} \label{l5}
Let $F$, $G$ be two non-constant meromorphic functions such that they share $(1,0)$ and $H \not\equiv 0.$ Then 
\beas  T(r, F) &\leq& N_2(r,0; F)+N_2(r,\infty; F)+N_2(r,0;G)+N_2(r,\infty;G)+2 \ol N(r,0; F)
\\&&+2\ol N(r,\infty; F)+\ol N(r,0;G)+\ol N(r,\infty;G)+S(r,F)+S(r,G).\eeas	
\end{lem}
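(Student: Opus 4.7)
The plan is to apply the Second Fundamental Theorem of Nevanlinna to $F$ with targets $0,1,\infty$, and then use the hypothesis $H\not\equiv 0$ to control $\ol N(r,1;F)$ under the merely IM sharing of the value $1$. The SFT gives
\[ T(r,F)\leq \ol N(r,0;F)+\ol N(r,1;F)+\ol N(r,\infty;F)-N_0(r,0;F')+S(r,F), \]
where $N_0(r,0;F')$ counts zeros of $F'$ off $F^{-1}\{0,1\}$. Since $F$ and $G$ share $(1,0)$, one decomposes
\[ \ol N(r,1;F)=N_E^{1)}(r,1;F)+\ol N_E^{(2}(r,1;F)+\ol N_L(r,1;F)+\ol N_L(r,1;G). \]

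Next, a direct local computation shows that $H$ vanishes at every common simple $1$-point of $F$ and $G$. Since $H$ is a difference of logarithmic derivatives, $m(r,H)=S(r,F)+S(r,G)$, hence
\[ N_E^{1)}(r,1;F)\leq N(r,0;H)\leq N(r,H)+S(r,F)+S(r,G). \]
A pole analysis of $H$ entirely parallel to Lemma~\ref{l2}, but with the value $1$ playing the role of the set of zeros, shows that every pole of $H$ is simple and can occur only at multiple zeros of $F$ or $G$, at poles of $F$ or $G$, at $1$-points where the two functions have differing multiplicities, or at the extra zeros of $F'$ or $G'$; this produces
\beas N(r,H)&\leq& \ol N(r,0;F\vline\geq 2)+\ol N(r,0;G\vline\geq 2)+\ol N(r,\infty;F)+\ol N(r,\infty;G)\\ && +\ol N_*(r,1;F,G)+\ol N_0(r,0;F')+\ol N_0(r,0;G'). \eeas

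To finish, I would bound the $L$-contributions via the logarithmic derivative lemma: $\ol N_L(r,1;F)\leq \ol N(r,0;F)+\ol N(r,\infty;F)+S(r,F)$, since each $L$-point of $F$ at $1$ is a zero of $F'/F$ while $T(r,F'/F)=\ol N(r,0;F)+\ol N(r,\infty;F)+S(r,F)$, and analogously for $\ol N_L(r,1;G)$, $\ol N_0(r,0;G')$, and $\ol N_E^{(2}(r,1;F)$. Substituting these estimates into the SFT inequality, rewriting $\ol N(r,0;F\vline\geq 2)=N_2(r,0;F)-\ol N(r,0;F)$ and analogously for the poles and for $G$, and absorbing $\ol N_0(r,0;F')$ into $-N_0(r,0;F')$ from the SFT, delivers the stated inequality. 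The principal obstacle will be the careful bookkeeping needed to produce exactly the asymmetric coefficients $N_2+2\ol N$ on the $F$-side versus $N_2+\ol N$ on the $G$-side; the asymmetry reflects the fact that the SFT is used only on $F$, whereas $N(r,H)$ and the sharing at $1$ contribute symmetrically in $F$ and $G$.
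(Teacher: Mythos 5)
The paper itself offers no proof of this lemma: it is imported verbatim from \cite{IJMMS_Ban_05}, so the only comparison possible is with the standard argument given there, which is precisely the route you outline --- second fundamental theorem for $F$ at $0,1,\infty$, the decomposition of $\ol N(r,1;F)$ into $N_E^{1)}(r,1;F)+\ol N_E^{(2}(r,1;F)+\ol N_L(r,1;F)+\ol N_L(r,1;G)$, the estimate $N_E^{1)}(r,1;F)\leq N(r,H)+S(r,F)+S(r,G)$, and a pole count for $H$. Those steps of your outline are correct (in particular, the local expansion showing that $H$ actually vanishes, not merely stays bounded, at a common simple $1$-point is right).

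The gap is in your final paragraph, where you propose to bound $\ol N_L(r,1;G)$, $\ol N_0(r,0;G')$ and $\ol N_E^{(2}(r,1;F)$ ``analogously'', i.e.\ each one separately by $\ol N(r,0;G)+\ol N(r,\infty;G)+S(r,G)$. Doing so charges $4\{\ol N(r,0;G)+\ol N(r,\infty;G)\}$ for the four $G$-side terms $2\ol N_L(r,1;G)+\ol N_E^{(2}(r,1;F)+\ol N_0(r,0;G')$, and the inequality you end up with carries $3\ol N(r,0;G)+3\ol N(r,\infty;G)$ where the statement has $\ol N(r,0;G)+\ol N(r,\infty;G)$: you prove a strictly weaker bound. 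To land on the stated coefficients you need disjointness: the point sets underlying $\ol N_L(r,1;G)$, $\ol N_E^{(2}(r,1;F)$ and $\ol N_0(r,0;G')$ are pairwise disjoint sets of zeros of $G'$ at which $G\neq 0$ (at the first two, $G$ takes the value $1$ with multiplicity at least $2$), so their \emph{sum} is at most $N(r,0;G'\mid G\neq 0)\leq \ol N(r,0;G)+\ol N(r,\infty;G)+S(r,G)$, and only the second copy of $\ol N_L(r,1;G)$ costs an additional $\ol N(r,0;G)+\ol N(r,\infty;G)$. For the same reason you must count only the multiple poles, $\ol N(r,\infty;F\mid\geq 2)+\ol N(r,\infty;G\mid\geq 2)$, in $N(r,H)$ --- the principal parts in $F''/F'-2F'/(F-1)$ cancel at a simple pole of $F$ --- because it is $\ol N(r,\infty;F)+\ol N(r,\infty;F\mid\geq 2)=N_2(r,\infty;F)$ that combines with the $2\ol N(r,\infty;F)$ produced by $2\ol N_L(r,1;F)$ to give exactly $N_2(r,\infty;F)+2\ol N(r,\infty;F)$. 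With these two corrections your scheme closes and reproduces the stated inequality.
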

\begin{lem} \cite{CVTA_Yi_95} \label{l6}
If $H \equiv 0$, then $T(r,G)=T(r,F)+ O(1)$. If, in addition, \beas & &\limsup\limits_{r \lra \infty}\frac{\ol N(r,0;F)+ \ol N(r, 0; G)+ \ol N(r,\infty;F)+ \ol N(r, \infty;G)}{\max\{T(r,F), T(r,G)\}}< 1,\\& &r\not \in I \eeas where $I \subset (0,1)$ is a set of infinite linear measure, then either $F\equiv G$ or $F \cdot G \equiv 1$.
\end{lem}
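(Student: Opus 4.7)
The plan is to exploit the integrable structure of $H$. Observing that
\[ H = \frac{d}{dz}\log\frac{F'}{(F-1)^2} - \frac{d}{dz}\log\frac{G'}{(G-1)^2}, \]
the assumption $H\equiv 0$ integrates once to $\frac{F'}{(F-1)^2}=A\,\frac{G'}{(G-1)^2}$ for some constant $A\neq 0$, and a second integration yields $\frac{1}{F-1}=\frac{A}{G-1}+B$ for some constant $B$. Solving for $F$ gives the M\"obius form
\[ F=\frac{(B+1)G+(A-B-1)}{B\,G+(A-B)}, \]
whose determinant equals $A\neq 0$. Since $F$ is a M\"obius transformation of $G$, the relation $T(r,F)=T(r,G)+O(1)$ follows at once, giving the first conclusion.

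For the second conclusion, the strategy is a case analysis on $(A,B)$. The specialisation $(A,B)=(1,0)$ gives $F\equiv G$, while $(A,B)=(-1,-1)$ yields $F=1/G$, hence $F\cdot G\equiv 1$. In every other admissible pair, I would exhibit an additional exceptional value for $G$, namely a finite complex number $w\notin\{0,\infty\}$ at which either the zeros or the poles of $F$ are attained: when $B=0$ and $A\neq 1$, the map $F$ is affine in $G$ and vanishes at $G=1-A$; when $B\neq 0$ and $A\neq B$, the poles of $F$ occur at $G=1-A/B$; and when $B\neq 0$, $A=B$, with $B\neq -1$, the zeros of $F$ occur at $G=1/(B+1)$. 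In each instance the bijectivity of the M\"obius relation yields $\ol N(r,w;G)\leq \ol N(r,0;F)+\ol N(r,\infty;F)$.

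Applying Nevanlinna's second fundamental theorem to $G$ at the three distinct values $0,\infty,w$ produces
\[ T(r,G)\leq \ol N(r,0;G)+\ol N(r,\infty;G)+\ol N(r,w;G)+S(r,G), \]
and combining with the bound above for $\ol N(r,w;G)$ gives
\[ T(r,G)\leq \ol N(r,0;F)+\ol N(r,0;G)+\ol N(r,\infty;F)+\ol N(r,\infty;G)+S(r,G). \]
Since $T(r,F)=T(r,G)+O(1)$, this estimate, combined with the assumed $\limsup<1$ along $r\notin I$, produces a contradiction, thereby forcing $F\equiv G$ or $F\cdot G\equiv 1$.

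The main obstacle is the bookkeeping of the several subcases, and in each one verifying that the identified third value $w$ lies outside $\{0,\infty\}$ and that multiplicities under the M\"obius correspondence match so that the reduced counting functions align as claimed. Once these points are settled, the second fundamental theorem dispatches the non-trivial cases cleanly.
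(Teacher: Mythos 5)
Your proof is correct and is essentially the standard argument behind this lemma, which the paper does not reprove but simply cites from Yi (1995): two integrations of $H\equiv 0$ yield the nondegenerate M\"obius relation $\frac{1}{F-1}=\frac{A}{G-1}+B$ with determinant $A\neq 0$ (whence $T(r,F)=T(r,G)+O(1)$), and your $(A,B)$ case analysis — isolating $F\equiv G$, $F\cdot G\equiv 1$, and otherwise producing a value $w\notin\{0,\infty\}$ with $\ol N(r,w;G)\leq \ol N(r,0;F)+\ol N(r,\infty;F)$ and applying the second fundamental theorem to $G$ at $0,\infty,w$ — is exactly the classical route. The only point to adjust is the reading of the exceptional set: the hypothesis should be taken with the $\limsup$ along a set $I\subset(0,\infty)$ of infinite linear measure (the paper's ``$I\subset(0,1)$'' is a misprint), so that the second-main-theorem estimate, valid outside a set of finite linear measure, meets the hypothesis along an unbounded set of radii and your final contradiction goes through.
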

\begin{lem} \label{l7}
Let $F$, $G$ be defined as in (\ref{e2.1}). Then 
\beas \ol N(r,0; F) &\leq& \ol N(r,0;f)+ (\Gamma_1-\chi_0^{r-1})T(r,f);\\
\ N_2(r,0;F) &\leq& (1+\chi_0^{r-1})\ol N(r,0;f)+ (\Gamma_2-\chi_0^{r-1}-\mu_0^{r-1})T(r,f).\eeas
Similar results hold for the function $G$.
\end{lem}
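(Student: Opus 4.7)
The plan is to read off both bounds directly from the factorization $F=Cf\,Q(f)=Cf\prod_{i=1}^{m_1}(f-\gamma_i)\prod_{i=m_1+1}^{m_1+m_2}(f-\gamma_i)^{n_i}$, tracing how each zero of $F$ corresponds to a zero of one of the factors, while keeping in mind that one of the $\gamma_i$'s may equal $0$ (precisely when $\chi_0^{r-1}=1$) and may itself be a multiple root of $Q$ (precisely when $\mu_0^{r-1}=1$). For the first inequality, I would observe that the distinct zeros of $F$ consist of the zeros of $f$ together with the $\gamma_i$-points of $f$ for $i=1,\ldots,m_1+m_2$. When $\chi_0^{r-1}=0$ these preimage sets are pairwise disjoint, and the First Main Theorem bound $\ol N(r,\gamma_i;f)\le T(r,f)+O(1)$ yields $\ol N(r,0;F)\le \ol N(r,0;f)+\Gamma_1\,T(r,f)$. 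When $\chi_0^{r-1}=1$, exactly one $\gamma_j=0$ merges with the zeros of $f$ and only $\Gamma_1-1$ distinct nonzero $\gamma_i$'s remain, producing $\ol N(r,0;f)+(\Gamma_1-1)T(r,f)$; the two cases unify to the stated coefficient $\Gamma_1-\chi_0^{r-1}$.

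For $N_2(r,0;F)$ the same dichotomy drives the argument, but now I would also track multiplicities. At a $\gamma_i$-point of $f$ with $i>m_1$, the exponent $n_i\ge 2$ forces multiplicity at least $2$ in $F$, capping its $N_2$ contribution at $2\ol N(r,\gamma_i;f)\le 2T(r,f)$; at a $\gamma_i$-point with $i\le m_1$, the multiplicity is preserved in $F$, so the contribution is bounded by $N_2(r,\gamma_i;f)\le T(r,f)$; and at a zero of $f$, the multiplicity in $F$ equals that of $f$ when $\chi_0^{r-1}=0$ (giving contribution $N_2(r,0;f)$) but is at least twice that of $f$ when $\chi_0^{r-1}=1$ (giving contribution $2\ol N(r,0;f)$), which accounts for the coefficient $(1+\chi_0^{r-1})$ on $\ol N(r,0;f)$. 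Summing over the $\gamma_i$'s produces a coefficient $m_1+2m_2=\Gamma_2$ on $T(r,f)$, which is reduced by one if the vanished $\gamma_j$ was a simple root of $Q$ (the case $\chi_0^{r-1}=1$, $\mu_0^{r-1}=0$) and by an additional one if it was a multiple root (the case $\chi_0^{r-1}=\mu_0^{r-1}=1$), exactly reproducing $\Gamma_2-\chi_0^{r-1}-\mu_0^{r-1}$.

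The main obstacle is really the bookkeeping across the three subcases $(\chi_0^{r-1},\mu_0^{r-1})\in\{(0,0),(1,0),(1,1)\}$: one needs to verify in each case that the coefficients coincide with the unified expressions given in the statement. Beyond that, everything reduces to the elementary estimates $\ol N(r,a;f)\le T(r,f)+O(1)$ and $N_2(r,a;f)\le N(r,a;f)\le T(r,f)+O(1)$. The analogous bounds for $G$ follow by running the identical argument with $g$ in place of $f$.
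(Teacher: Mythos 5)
Your argument follows the same route as the paper's own proof: factor $F=Cf\,Q(f)$, split into cases according to whether some $\gamma_i$ vanishes and, if so, whether it is a simple or a multiple root of $Q$, and bound each factor's reduced counting function by $T(r,f)+O(1)$. Your treatment of $\ol N(r,0;F)$ and of the $\gamma_i$-points in $N_2(r,0;F)$ coincides with the paper's computation, as does the bookkeeping that produces the corrections $-\chi_0^{r-1}$ and $-\chi_0^{r-1}-\mu_0^{r-1}$.

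There is, however, one step in your second estimate that does not close. Having (correctly) identified the contribution of the zeros of $f$ to $N_2(r,0;F)$ as $N_2(r,0;f)$ when $\chi_0^{r-1}=0$, you cannot conclude that it is at most $(1+\chi_0^{r-1})\ol N(r,0;f)=\ol N(r,0;f)$: one has $N_2(r,0;f)\geq \ol N(r,0;f)$, with equality only when every zero of $f$ is simple, and a multiple zero of $f$ contributes $2$ to $N_2(r,0;F)$ while contributing only $1$ to $\ol N(r,0;f)$. To be fair, this is not a defect you introduced; the paper's own proof silently writes $\ol N(r,0;f)$ for this contribution in its Case 1, so the second inequality as stated is only valid with $N_2(r,0;f)$ (equivalently, with $2\ol N(r,0;f)$ or $N(r,0;f)$) in place of $\ol N(r,0;f)$ when $\chi_0^{r-1}=0$. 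The slip is harmless for the later applications, since in the proofs of the theorems the term $(1+\chi_0^{r-1})\ol N(r,0;f)$ is immediately majorized by $(1+\chi_0^{r-1})T(r,f)$, and $N_2(r,0;f)\leq T(r,f)+O(1)$ yields the same final coefficients. Apart from this shared weak point, your proposal is essentially the paper's proof.
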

\begin{proof}
Here we have to consider two cases:\par
{\bf Case 1.} Suppose none of $\gamma_i's\; (i=1,2,\ldots,m_1+m_2)$ is zero. Then 
\beas \ol N(r,0; F) \leq \ol N(r,0;f)+ \sum\limits_{i=1}^{m_1+m_2}\ol N(r, \gamma_i;f) \leq \ol N(r,0;f)+ \Gamma_1 T(r,f);\eeas
\beas N_2(r,0; F) &\leq& \ol N(r,0;f) + \sum\limits_{i=1}^{m_1} \ol N(r, \gamma_i;f)+2\sum\limits_{i=m_1+1}^{m_1+m_2} \ol N(r, \gamma_i;f) \\&\leq& \ol N(r,0;f)+ \Gamma_2T(r,f). \eeas
{\bf Case 2.} Next let one of $\gamma_i's\; (i=1,2,\ldots,m_1+m_2)$ is zero.\par 
{\bf Subcase 2.1.} Suppose one among $\gamma_i's\; (i=1,2,\ldots,m_1)$ is zero. Without loss of generality let us assume that $\gamma_1=0$. Then 
\beas \ol N(r,0; F) \leq \ol N(r,0;f)+ \sum\limits_{i=2}^{m_1+m_2}\ol N(r, \gamma_i;f) \leq  \ol N(r,0;f)+(\Gamma_1-1) T(r,f);\eeas
\beas N_2(r,0; F) &\leq& 2 \ol N(r,0;f) + \sum\limits_{i=2}^{m_1} \ol N(r, \gamma_i;f)+2\sum\limits_{i=m_1+1}^{m_1+m_2} \ol N(r, \gamma_i;f)\\ &\leq& 2\ol N(r,0;f)+ (\Gamma_2-1)T(r,f). \eeas
{\bf Subcase 2.2.} Next suppose one among $\gamma_i's\; (i=m_1+1,m_1+2,\ldots,m_1+m_2)$ is zero. Without loss of generality let us assume that $\gamma_{m_1+1}=0$. Then 
\beas \ol N(r,0; F) &\leq& \ol N(r,0;f)+ \sum\limits_{i=1}^{m_1}\ol N(r, \gamma_i;f)+\sum\limits_{i=m_1+2}^{m_1+m_2}\ol N(r, \gamma_i;f)\\ &\leq&  \ol N(r,0;f)+(\Gamma_1-1) T(r,f);\eeas
\beas N_2(r,0; F)& \leq& 2 \ol N(r,0;f) + \sum\limits_{i=1}^{m_1} \ol N(r, \gamma_i;f)+2\sum\limits_{i=m_1+2}^{m_1+m_2} \ol N(r, \gamma_i;f)\\ &\leq& 2\ol N(r,0;f)+ (\Gamma_2-2)T(r,f). \eeas
Combining all the cases we can write 
\beas \ol N(r,0; F) &\leq& \ol N(r,0;f)+ (\Gamma_1-\chi_0^{r-1})T(r,f);\\
N_2(r,0;F) &\leq& (1+\chi_0^{r-1})\ol N(r,0;f)+ (\Gamma_2-\chi_0^{r-1}-\mu_0^{r-1})T(r,f).\eeas
\end{proof} 
\begin{lem} \label{l8}
Let $F$, $G$ be given by (\ref{e2.1}) and $H\not\equiv 0$. If $F$, $G$ share $(1,2)$ and $f$, $g$ share $(0,0)$, then
\beas &&\frac{r}{2}\{T(r,f)+ T(r,g)\}\\& \leq& 3\ol N(r,0;f)+ \sum\limits_{i=1}^{\Gamma_1-\chi_0^{r-1}}\{\ol N(r,\gamma_i;f)+ \ol N(r, \gamma_i;g)\}\\&&+ 2\{\ol N(r, \infty;f) +  \ol N(r,\infty;g)\}+ S(r,f)+S(r,g).\eeas
\end{lem}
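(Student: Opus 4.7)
The plan is to extract the desired bound directly from Lemma \ref{l3}, applied to $F, G$ (which share $(1,2)$), then pass from $T(r,F)$ to $rT(r,f)$ via Lemma \ref{l1} and bound the $N_2$ terms by elementary zero-counting.

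First, Lemma \ref{l3} allows three alternatives. The hypothesis $H\not\equiv 0$ rules out $F\equiv G$ immediately. The remaining degenerate case $F\cdot G \equiv 1$ is also excluded by the hypothesis: substituting $G = 1/F$ into the definition (\ref{e2.2}) of $H$ and using $G''/G' = F''/F' - 2F'/F$ together with $G'/(G-1) = F'/(F(F-1))$, the two bracketed expressions in $H$ telescope to yield $H\equiv 0$, contradicting the hypothesis. Hence the first alternative of Lemma \ref{l3} applies:
\[ T(r,F) \le N_2(r,0;F) + N_2(r,0;G) + N_2(r,\infty;F) + N_2(r,\infty;G) + S(r,F) + S(r,G), \]
together with the analogous inequality for $T(r,G)$.

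Next, Lemma \ref{l1} yields $T(r,F) = rT(r,f) + O(1)$ and $T(r,G) = rT(r,g) + O(1)$. Adding the two inequalities above, halving, and absorbing $S$-terms gives
\[ \frac{r}{2}\{T(r,f)+T(r,g)\} \le N_2(r,0;F) + N_2(r,0;G) + N_2(r,\infty;F) + N_2(r,\infty;G) + S(r,f) + S(r,g). \]
Since $F$ is a polynomial in $f$ of degree $r \ge 2$, each pole of $F$ has multiplicity at least two, so $N_2(r,\infty;F) \le 2\ol N(r,\infty;f)$ and similarly for $G$, producing the $2\{\ol N(r,\infty;f)+\ol N(r,\infty;g)\}$ contribution on the right. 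For the zero-terms I would invoke the case analysis in Lemma \ref{l7}'s proof to bound $N_2(r,0;F)$ by a multiple of $\ol N(r,0;f)$ plus a sum of $\ol N(r,\gamma_i;f)$ ranging over the nonzero roots $\gamma_1,\ldots,\gamma_{\Gamma_1-\chi_0^{r-1}}$ of $Q$, and analogously for $N_2(r,0;G)$; the IM-sharing relation $\ol N(r,0;f)=\ol N(r,0;g)$ then collapses the $\ol N(r,0;\cdot)$-contributions into a single multiple of $\ol N(r,0;f)$, while the $\gamma_i$-contributions assemble into the stated $\sum_{i=1}^{\Gamma_1-\chi_0^{r-1}}\{\ol N(r,\gamma_i;f)+\ol N(r,\gamma_i;g)\}$.

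The main obstacle will be twofold: verifying the identity $F\cdot G \equiv 1 \Rightarrow H\equiv 0$ via the explicit differentiation indicated above — the only non-routine step in ruling out the degenerate alternatives of Lemma \ref{l3} — and the delicate case-by-case bookkeeping inside Lemma \ref{l7}, in particular whether $0$ coincides with one of the $\gamma_i$ and whether such a $\gamma_i$ is a simple or multiple root of $Q$, in order to recover the precise coefficients $3$ on $\ol N(r,0;f)$ and $1$ on each $\ol N(r,\gamma_i;\cdot)$ displayed in the statement.
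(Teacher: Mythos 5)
Your reduction to Lemma \ref{l3} does not prove the lemma as stated; the loss is in the counting functions, not in bookkeeping. The parts you should keep are correct: $H\not\equiv 0$ trivially excludes $F\equiv G$, and your computation that $G=1/F$ forces $H\equiv 0$ (via $G''/G'=F''/F'-2F'/F$ and $2G'/(G-1)=2F'/(F(F-1))$) is right, so alternative (i) of Lemma \ref{l3} does apply. But alternative (i) is an inequality in $N_2$-counting functions, and from there the stated right-hand side is unreachable. If $\gamma_i$ is a multiple root of $Q$ (i.e. $i>m_1$, $n_i\geq 2$), a \emph{simple} $\gamma_i$-point of $f$ is already a zero of $F$ of multiplicity $n_i\geq 2$, hence contributes $2$ to $N_2(r,0;F)$, while the lemma allows only the reduced term $\ol N(r,\gamma_i;f)$ with coefficient $1$ (and nothing from $g$, since no sharing of $\gamma_i$ is assumed); likewise a common double zero of $f$ and $g$ contributes $4$ to $N_2(r,0;F)+N_2(r,0;G)$ against the allowed $3\ol N(r,0;f)$. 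So the intermediate inequality your plan needs, namely $N_2(r,0;F)+N_2(r,0;G)\leq 3\ol N(r,0;f)+\sum_{i=1}^{\Gamma_1-\chi_0^{r-1}}\{\ol N(r,\gamma_i;f)+\ol N(r,\gamma_i;g)\}$, is false, and what your route actually yields is the weaker $\Gamma_2$-type estimate (coefficient $2$ on multiple roots, up to $4$ on $\ol N(r,0;f)$) --- which is precisely the bound already exploited in Case 1 of Theorem \ref{t1}, not Lemma \ref{l8}.

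The paper's proof goes differently: following Lemma 2.5 of \cite{TJM_Ban_10}, one applies the second fundamental theorem to $F$ and $G$ (giving $T(r,F)+T(r,G)$ on the left with only \emph{reduced} counting functions $\ol N(r,0;F),\ol N(r,0;G)$ on the right), then estimates $\ol N(r,1;F)+\ol N(r,1;G)$ using the weighted $(1,2)$-sharing together with the bound on $N(r,H)$ from Lemma \ref{l2}; the $1$-point terms are absorbed into the left-hand side, which is how the factor $\tfrac r2$ arises, and this is also where $H\not\equiv 0$ and the $(0,0)$-sharing of $f,g$ are genuinely used, through $\ol N_*(r,0;f,g)\leq\ol N(r,0;f)=\ol N(r,0;g)$. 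The first estimate of Lemma \ref{l7} (for $\ol N(r,0;F)$, not $N_2$) then produces each distinct nonzero root $\gamma_i$ with coefficient $1$, and $2\ol N(r,0;f)+\ol N_*(r,0;f,g)\leq 3\ol N(r,0;f)$ gives the coefficient $3$. To repair your argument you would have to abandon Lemma \ref{l3} and run this second-fundamental-theorem/$H$-function argument instead.
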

\begin{proof}
Using {\it Lemmas \ref{l1}, \ref{l2} and \ref{l7}} the proof can be carried out in the line of the proof of {\it Lemma 2.5} in \cite[p. 385]{TJM_Ban_10}.
\end{proof}
\begin{lem}	\label{l9} Let $f,\; g \in \mathcal{M}_1(\mathbb{C})$. If $f$, $g$ share $(\{\beta_1, \beta_2\}, 0)$, then $\lambda (f)= \lambda (g)$.	\end{lem}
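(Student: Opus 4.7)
The plan is to apply Nevanlinna's Second Main Theorem to $f$ at the three distinct targets $\beta_1,\beta_2,\infty$ and then use the set-sharing hypothesis together with the First Main Theorem to dominate the resulting zero counts by $T(r,g)$. (The case where $f$ or $g$ is constant is degenerate and can be handled directly.) First I would write
\beas T(r,f)\leq \ol N(r,\beta_1;f)+\ol N(r,\beta_2;f)+\ol N(r,\infty;f)+S(r,f).\eeas
Since $f\in\mathcal{M}_1(\mathbb{C})$ has only finitely many poles, $\ol N(r,\infty;f)=O(\log r)$.

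Next, because $f$ and $g$ share $(\{\beta_1,\beta_2\},0)$, the two unordered sets $\{z:f(z)\in\{\beta_1,\beta_2\}\}$ and $\{z:g(z)\in\{\beta_1,\beta_2\}\}$ coincide in $\mathbb{C}$; as $\beta_1\neq\beta_2$ the two counting functions on each side add without overlap, yielding
\beas \ol N(r,\beta_1;f)+\ol N(r,\beta_2;f)=\ol N(r,\beta_1;g)+\ol N(r,\beta_2;g).\eeas
Applying the First Main Theorem to each term on the right gives $\ol N(r,\beta_i;g)\leq T(r,g-\beta_i)=T(r,g)+O(1)$, and substituting back produces
\beas T(r,f)\leq 2T(r,g)+O(\log r)+S(r,f).\eeas

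Finally, since $S(r,f)=o(T(r,f))$ as $r\to\infty$ outside an exceptional set $E$ of finite linear measure, the last display rearranges to $(1-o(1))T(r,f)\leq 2T(r,g)+O(\log r)$ for $r\notin E$. Monotonicity of $T(r,f)$ in $r$ together with the finite measure of $E$ allows me to replace any large $r$ by a nearby $r'=r+O(1)$ with $r'\notin E$, which absorbs the exceptional set at the cost of an inessential shift; taking logarithms, dividing by $\log r$, and passing to $\limsup$ then gives $\lambda(f)\leq\lambda(g)$. Interchanging the roles of $f$ and $g$ is legitimate because the sharing hypothesis is symmetric and both functions lie in $\mathcal{M}_1(\mathbb{C})$, so the reverse inequality holds, and hence $\lambda(f)=\lambda(g)$. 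I expect the only genuine technicality to be this routine exceptional-set maneuver; the Nevanlinna input itself is immediate from the two fundamental theorems.
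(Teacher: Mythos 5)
Your argument is correct: the Second Main Theorem applied to $f$ at $\beta_1,\beta_2,\infty$, the identification of the two reduced counting sums via the IM set-sharing, the First Main Theorem bound, and the standard exceptional-set maneuver together give $T(r,f)=O(T(r,g))$ off a set of finite measure and hence $\lambda(f)\leq\lambda(g)$, with symmetry giving equality. The paper itself offers no independent proof --- it merely states that the lemma ``can be extracted from the first part of the proof of Theorem 1.3'' in Chen's paper --- and that extracted argument is precisely the one you have written out, so your proposal matches the intended proof.
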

\begin{proof}
Proof of this lemma can be extracted from the first part of the proof of {\it Theorem 1.3} in \cite{OpenM_Chen_17}.
\end{proof}
\begin{lem} \cite{KAP_YangYi_03} \label{l10}
	Let $a_1$, $a_2$, and $a_3$ be three distinct complex numbers in $\mathbb{C}\cup\{\infty\}$. If two non-constant meromorphic functions $f$ and $g$ share $(a_1,\infty)$, $(a_2,\infty)$, $(a_3,\infty)$ and if the order of $f$ and $g$ is neither an integer nor infinite, then $f \equiv g$.
\end{lem}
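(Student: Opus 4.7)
The plan is to reduce to the canonical three-CM-values case $\{0,1,\infty\}$ via a M\"obius transformation, exploit the exponential representation of CM-shared functions, and derive a contradiction with the non-integer order hypothesis from the polynomial structure forced on the exponents. Choose a M\"obius transformation $L$ of $\mathbb{C}\cup\{\infty\}$ sending $\{a_1,a_2,a_3\}$ to $\{0,1,\infty\}$, and set $F:=L\circ f$, $G:=L\circ g$. Then $F$ and $G$ are non-constant meromorphic functions sharing $0$, $1$, $\infty$ CM; since $L$ is M\"obius, $T(r,F)=T(r,f)+O(1)$ and similarly for $G$, so $\lambda(F)=\lambda(f)$ and $\lambda(G)=\lambda(g)$ remain finite and non-integer, and it suffices to prove $F\equiv G$.

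Since $F,G$ share $0$ and $\infty$ CM, the ratio $G/F$ has no zeros or poles in $\mathbb{C}$, so $G/F=e^{\alpha}$ for some entire function $\alpha$; similarly $(G-1)/(F-1)=e^{\beta}$ for some entire $\beta$. Substituting $G=Fe^{\alpha}$ into $G-1=(F-1)e^{\beta}$ yields the functional identity
\begin{equation*}
F\bigl(e^{\alpha}-e^{\beta}\bigr)=1-e^{\beta}.
\end{equation*}
If $e^{\alpha}\equiv e^{\beta}$, then the right-hand side forces $e^{\beta}\equiv 1$, hence $e^{\alpha}\equiv 1$ and $F\equiv G$. Otherwise $F=(1-e^{\beta})/(e^{\alpha}-e^{\beta})$.

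From $G=Fe^{\alpha}$ we have $T(r,e^{\alpha})\le T(r,F)+T(r,G)+O(1)$, and the classical Nevanlinna theorem on three-value CM sharing gives $T(r,G)=T(r,F)+O(1)$, so $T(r,e^{\alpha})=O(T(r,F))$; analogously for $e^{\beta}$. Since $\lambda(F)<\infty$, both $e^{\alpha}$ and $e^{\beta}$ are of finite order, so $\alpha$ and $\beta$ are polynomials of degree at most $\lambda(F)$. Because $F$ is non-constant, at least one of $\alpha,\beta$ is non-constant. Combining this with the direct bound $\lambda(F)\le\max\{\deg\alpha,\deg\beta\}$ extracted from the explicit formula for $F$, one concludes $\lambda(F)=\max\{\deg\alpha,\deg\beta\}\in\mathbb{Z}_{\ge 1}$, contradicting the non-integer order hypothesis. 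Hence $f\equiv g$.

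The main obstacle is the step invoking $T(r,G)=T(r,F)+O(1)$ for three-value CM shared pairs --- a classical but non-trivial consequence of the Nevanlinna second main theorem --- together with the task of ruling out any unexpected cancellation in $(1-e^{\beta})/(e^{\alpha}-e^{\beta})$ that could cause $\lambda(F)$ to drop strictly below $\max\{\deg\alpha,\deg\beta\}$. Granting these standard facts (for which one may cite the foundational treatment in Yang and Yi's monograph), the remaining polynomial reduction and matching of orders is routine.
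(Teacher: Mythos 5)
The paper offers no proof of this lemma: it is quoted directly from Yang and Yi's monograph, so there is no in-paper argument to compare against. Your reconstruction is the standard proof of that textbook fact and is essentially sound: the M\"obius normalization to $\{0,1,\infty\}$ (which preserves order since $T(r,L\circ f)=T(r,f)+O(1)$), the factorizations $G/F=e^{\alpha}$ and $(G-1)/(F-1)=e^{\beta}$ with $\alpha,\beta$ entire, the identity $F(e^{\alpha}-e^{\beta})=1-e^{\beta}$, and the reduction of $\alpha,\beta$ to polynomials via Hadamard's factorization are all correct.

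One remark on your closing paragraph: the two ``obstacles'' you flag are not actually load-bearing. The lower bound on the order does not come from the explicit formula for $F$ (so no cancellation analysis is needed); it comes from $e^{\alpha}=G/F$ and $e^{\beta}=(G-1)/(F-1)$, which give $T(r,e^{\alpha})\le T(r,F)+T(r,G)+O(1)$ and likewise for $e^{\beta}$, hence $\max\{\deg\alpha,\deg\beta\}\le\max\{\lambda(F),\lambda(G)\}$. Combined with the upper bound $\lambda(F),\lambda(G)\le\max\{\deg\alpha,\deg\beta\}$ read off from $F=(1-e^{\beta})/(e^{\alpha}-e^{\beta})$ and $G=Fe^{\alpha}$, this forces $\max\{\lambda(F),\lambda(G)\}=\max\{\deg\alpha,\deg\beta\}$, a positive integer (positive because $\alpha,\beta$ cannot both be constant, as $F$ is non-constant), contradicting the hypothesis that both orders are finite and non-integral. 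In particular the classical relation $T(r,F)=T(r,G)+O(1)$ for three CM-shared values, which is the one genuinely nontrivial external input you invoke, can be dispensed with entirely here because the statement assumes finiteness and non-integrality of \emph{both} $\lambda(f)$ and $\lambda(g)$; it would only become necessary if the order restriction were imposed on $f$ alone. With that observation your argument is complete and self-contained apart from Hadamard's theorem.
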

\begin{lem} \cite[p. 216]{KAP_YangYi_03} \label{l11}
Let $f$, $g$ be two non-constant meromorphic functions. If $f$ and $g$ share $(0,\infty)$, $(1,\infty)$, $(\infty,\infty)$, $(c,\infty)$, where $c \in \mathbb{C} \setminus \{-1, \frac{1}{2}, 2\}$, then $f \equiv g$.
\end{lem}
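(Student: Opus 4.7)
The plan is to appeal to Nevanlinna's classical four-value theorem, which states that if two distinct nonconstant meromorphic functions share four distinct values in $\mathbb{C}\cup\{\infty\}$ counting multiplicities, then $f$ must be a M\"obius transformation of $g$; moreover, two of the four shared values are Picard exceptional values of both $f$ and $g$, and the cross ratio of the four shared values, in some ordering of the arguments, equals $-1$ (the harmonic case).

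First I would suppose, for the sake of contradiction, that $f \not\equiv g$. Because $f$ and $g$ share $0$, $1$, $\infty$, and $c$ with weight $\infty$ (i.e.\ CM), all four values satisfy the hypotheses of Nevanlinna's theorem. The theorem then forces the cross ratio of $\{0, 1, \infty, c\}$, under some permutation of its four arguments, to equal $-1$.

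Next, I would enumerate the six values that the cross ratio can take under the action of the symmetric group $S_4$ on its four arguments, namely
\[
c, \qquad \frac{1}{c}, \qquad 1-c, \qquad \frac{1}{1-c}, \qquad \frac{c-1}{c}, \qquad \frac{c}{c-1},
\]
and set each of these equal to $-1$. A direct calculation shows that the only solutions are $c \in \{-1,\, 2,\, \tfrac{1}{2}\}$. Since by hypothesis $c \in \mathbb{C}\setminus\{-1, \tfrac{1}{2}, 2\}$, no ordering of the four shared values produces a harmonic cross ratio. This contradicts Nevanlinna's four-value theorem, and therefore $f \equiv g$.

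The main obstacle is not computational but conceptual: one must invoke the sharp form of Nevanlinna's four-value theorem that both pins down the M\"obius relation between $f$ and $g$ and further identifies the cross ratio of the shared values as harmonic. Once this is in hand, the excluded set $\{-1,\tfrac{1}{2},2\}$ is exactly the orbit of $-1$ under the six-element cross-ratio group, so the conclusion follows at once.
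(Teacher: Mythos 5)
The paper does not prove this lemma at all---it is quoted as a known result from Yang and Yi's monograph---and your argument is the standard derivation of that result: Nevanlinna's four-CM-value theorem forces, when $f\not\equiv g$, a harmonic cross ratio among the four shared values, and your enumeration of the six cross-ratio values of $\{0,1,\infty,c\}$ correctly identifies $c\in\{-1,\tfrac12,2\}$ as the only cases admitting the harmonic configuration, so the stated exclusion yields $f\equiv g$. The one point left tacit is that the argument needs $c\notin\{0,1\}$ so that the four shared values are genuinely distinct (otherwise only three values are shared CM and the conclusion fails); this hypothesis is implicit in the source and in the paper's application of the lemma.
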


\section {Proofs of the theorems}
\begin{proof} [\bf {Proof of Theorem \ref{t1}}]
Since $f$, $g$ share $(S_1,l)$, we see that $F$, $G$ share $(1,l)$.\par
We consider the following cases.\par 
{\bf \ul{Case 1.}} Let $l=2$. \par 
So using {\it Lemmas \ref{l1}, \ref{l3}\; and\; \ref{l7}}, we obtain for $\epsilon>0$
\bea \label{et1.1} rT(r,f) &\leq& N_2(r,0;F)+ N_2(r,0;G) + N_2(r, \infty;F)+ N_2(r,\infty;G)\\&&+ S(r,F)+ S(r,G)\nonumber\\&\leq& (1+\chi_0^{r-1})\{\ol N(r,0;f)+ \ol N(r,0;g)\} +(\Gamma_2-\chi_0^{r-1}-\mu_0^{r-1})\{T(r,f)\nonumber\\ && + T(r,g) \}+ 2 \ol N(r, \infty;f)+ 2\ol N(r, \infty;g)+ S(r,f)+ S(r,g)\nonumber   \\ &\leq& (\Gamma_2-\mu_0^{r-1}+1)\{T(r,f)+ T(r,g) \}+ 2 \ol N(r, \infty;f)+ 2\ol N(r, \infty;g)\nonumber\\ && + S(r,f)+ S(r,g).\nonumber\eea
 Similarly, 
\bea \label{et1.2} rT(r,g) &\leq& (\Gamma_2-\mu_0^{r-1}+1)\{T(r,f)+ T(r,g) \}+ 2 \ol N(r, \infty;f)+ 2\ol N(r, \infty;g)\\ && + S(r,f)+ S(r,g).\nonumber\eea
Combining (\ref{et1.1}) and (\ref{et1.2}) we see that
\beas && r\{T(r,f)+T(r,g)\}\\ &\leq& \bigg\{2\Gamma_2-2\mu_0^{r-1}+6-4\min\{\Theta(\infty,f),\Theta(\infty,g)\}+2\epsilon\bigg\}\{T(r,f)+ T(r,g)\}\\&& +S(r,f)+ S(r,g),\eeas
which contradicts (\ref{t1.1}).\par 
Therefore from {\it Lemma \ref{l3}}, we have either $F \cdot G \equiv 1$ or $F \equiv G$.\par 
Let if possible $F \cdot G \equiv 1$,
i.e., 
\bea \label{et1.3} f\prod_{i=1}^{m_1}(f-\gamma_i)\prod_{i=m_1+1}^{m_1+m_2}(f-\gamma_i)^{n_{i}}.g\prod_{i=1}^{m_1}(g-\gamma_i)\prod_{i=m_1+1}^{m_1+m_2}(g-\gamma_i)^{n_{i}}\equiv 1. \eea
Suppose $z_i$ is a zero of $f-\gamma_i$ for any $i \in \{1, 2, \ldots, m_1\}$ of order $p_i$. From (\ref{et1.3}) we see that $z_i$ is a pole of $g$. Suppose that $z_i$ is a pole of $g$ of order $q_i$. So from (\ref{et1.3}) we obtain $p_i= r q_i$. From this we get $r \leq p_i$. Thus 
\bea \label{et1.4} \ol N(r,\gamma_i;f) \leq \frac{1}{r} N(r, \gamma_i;f) \leq \frac{1}{r}T(r,f)+ S(r,f). \eea 
Since $\gamma_i's$ are distinct for $i=1, 2, \ldots, m_1+m_2$, among $\gamma_1$, $\gamma_2$, \ldots, $\gamma_{m_1}$ or $\gamma_{m_1+1}, \gamma_{m_1+2}, \ldots, \gamma_{m_1+m_2}$ at most one can be zero. Thus we consider the following cases.  \par 
{\bf \ul{Subcase 1.1.}} Suppose none of $\gamma_i's\; (i=1,2,\ldots,m_1+m_2)$ is zero. So if $z_0$ be a zero of $f$ of order $p_0$, then $z_0$ is a pole of $g$ of order $q_0$ such that $p_0= rq_0\geq r$. Therefore 
\beas \ol N(r,0;f) \leq \frac{1}{r}\ol N(r,0;f).\eeas

Now by the Second Fundamental Theorem, using (\ref{et1.4}) and {\it Lemma \ref{l1}}, we get
\beas  \Gamma_1T(r,f)&\leq& \ol N(r,0;f)+ \sum\limits_{i=1}^{\Gamma_1} \ol N(r, \gamma_i;f) + \ol N(r, \infty;f)+ S(r,f),\eeas i.e., 
\beas \Gamma_1T(r,f)&\leq& \bigg(\frac{1+ m_1}{r}+m_2+1\bigg) T(r,f) + S(r,f),\eeas
which is a contradiction for $m_1 > 1$.\par 
{\bf \ul{Subcase 1.2.}} Next let one of $\gamma_i's\; (i=1,2,\ldots,m_1+m_2)$ be zero.\par 
{\bf \ul{Subcase 1.2.1.}} Suppose one among $\gamma_i's\; (i=1,2,\ldots,m_1)$ is zero. Without loss of generality let us assume that $\gamma_1=0$. So if $z_1$ be a zero of $f$ of order $p_0$, then $z_1$ is a pole of $g$ of order $q_0$ such that $2 p_0= rq_0\geq r$. Therefore 
\beas \ol N(r,0;f) \leq \frac{2}{r} \ol N(r,0;f).\eeas
Now by the Second Fundamental Theorem, using (\ref{et1.4}) and {\it Lemma \ref{l1}}, we get
\beas  (\Gamma_1-1)T(r,f)&\leq& \ol N(r,0;f)+ \sum\limits_{i=1}^{\Gamma_1-1} \ol N(r, \gamma_i;f) + \ol N(r, \infty;f)+ S(r,f),\eeas i.e., 
\beas (\Gamma_1-1)T(r,f)&\leq& \bigg(\frac{2+ (m_1-1)}{r}+m_2+1\bigg) T(r,f) + S(r,f),\eeas
which is a contradiction for $m_1 >2$.\par 
{\bf \ul{Subcase 1.2.2.}} Next suppose one among $\gamma_i's\; (i=m_1+1,m_1+2,\ldots,m_1+m_2)$ is zero. Without loss of generality let us assume that $\gamma_{m_1+1}=0$. Then if $z_2$ be a zero of $f$ of order $p_0$, then $z_2$ is a pole of $g$ of order $q_0$ such that $(1+\delta_{m_1+1}) p_0= rq_0\geq r$, where $\delta_{m_1+1}$ is the order of the factor $(f-\gamma_{m_1+1})$. Therefore 
\beas \ol N(r,0;f) \leq \frac{1+\delta_{m_1+1}}{r} \ol N(r,0;f).\eeas
Now by the Second Fundamental Theorem, using (\ref{et1.4}) and {\it Lemma \ref{l1}}, we get
\beas  (\Gamma_1-1)T(r,f)&\leq& \ol N(r,0;f)+ \sum\limits_{i=1}^{\Gamma_1-1} \ol N(r, \gamma_i;f) + \ol N(r, \infty;f)+ S(r,f),\eeas i.e., 
\beas (\Gamma_1-1)T(r,f)&\leq& \bigg(\frac{1+\delta_{m_1+1}+m_1}{r}+(m_2-1)+1\bigg) T(r,f) + S(r,f),\eeas
which is a contradiction for $m_1 > 1$.\par  
Therefore $F \cdot G\not \equiv 1$.\par
Hence, $F\equiv G$, i.e., 
\bea \label{et1.5} \frac{(f-\alpha_1)(f-\alpha_2)\ldots (f-\alpha_r)}{(g-\alpha_1)(g-\alpha_2) \ldots (g-\alpha_r)} \equiv 1.\eea  
As we must have
\beas (\beta_1 - \alpha_1)^2(\beta_1 - \alpha_2)^2 \ldots (\beta_1 - \alpha_r)^2 \not= (\beta_2 - \alpha_1)^2 (\beta_2 - \alpha_2)^2 \ldots (\beta_2 - \alpha_r)^2,\eeas
so $f = \beta_1$ if and only if $g= \beta_1$ and $f = \beta_2$ if and only if $g= \beta_2$ since $f$ and $g$ share $(S_2, 0)$. Again from (\ref{et1.5}) we see that $f$ and $g$ share $(\beta_1, \infty)$, $(\beta_2, \infty)$ and $(\infty, \infty)$ from which in the line of the proof of {\it Lemma \ref{l9}} we can show that the order of $f$ and that of $g$ are equal. Thus the conclusion follows from {\it Lemma \ref{l10}}.\par  
{\bf \ul {Case 2.}} Let $0 \leq l \leq 1$. \par
{\bf \ul{Subcase 2.1.}} Let $H \not \equiv 0$. \par
Let $l=1$. Then using {\it Lemmas \ref{l1}, \ref{l4} and \ref{l7}}, we obtain for $\epsilon>0$ 
\bea \label{et1.6} &&rT(r, f)\\ &\leq& N_2(r,0; F)+N_2(r,\infty; F )+N_2(r,0;G)+N_2(r,\infty;G)\nonumber \\&& +\frac{1}{2}\ol N(r,0; F)+ \frac{1}{2}\ol N(r,\infty; F)+ S(r,F)+ S(r,G)\nonumber\\ &\leq& \bigg(\frac{3}{2}+ \chi_0^{r-1}\bigg)\ol N(r,0;f)+ (1+\chi_0^{r-1}) \ol N(r,0;g) \nonumber\\&&+\bigg(\Gamma_2+\frac{\Gamma_1}{2} -\frac{3 }{2}\chi_0^{r-1}-\mu_0^{r-1}\bigg)T(r,f)+ \bigg(\Gamma_2-\chi_0^{r-1}-\mu_0^{r-1}\bigg)T(r,g)
\nonumber\\&&+ \frac{5}{2}\ol N(r, \infty;f)+ 2\ol N(r, \infty;g)+ S(r,f)+ S(r,g)\nonumber\\ &\leq&\bigg(\Gamma_2+\frac{\Gamma_1}{2}-\frac{1}{2}\chi_0^{r-1}-\mu_0^{r-1}+\frac{3}{2}\bigg)T(r,f)+\bigg(\Gamma_2-\mu_0^{r-1}+1\bigg)T(r,g)\nonumber\\&&+ \frac{5}{2}\ol N(r, \infty;f)+ 2\ol N(r, \infty;g)+ S(r,f)+ S(r,g).\nonumber \eea
Similarly, 
\bea \label{et1.7} rT(r,g) &\leq&\bigg(\Gamma_2+\frac{\Gamma_1}{2}-\frac{1}{2}\chi_0^{r-1}-\mu_0^{r-1}+\frac{3}{2}\bigg)T(r,g)+\bigg(\Gamma_2-\mu_0^{r-1}+1\bigg)T(r,f)\\&&+ \frac{5}{2}\ol N(r, \infty;g)+ 2\ol N(r, \infty;f)+ S(r,f)+ S(r,g).\nonumber \eea
Combining (\ref{et1.6}) and (\ref{et1.7}) we get
\beas &&r\{T(r,f)+ T(r,g)\}\\&\leq& \bigg(2\Gamma_2+\frac{\Gamma_1}{2}-\frac{1}{2}\chi_0^{r-1}-2\mu_0^{r-1}+\frac{5}{2}\bigg)\{T(r,f)+T(r,g)\}+ \frac{9}{2}\{\ol N(r, \infty;f)+ \ol N(r, \infty;g)\}\\&&+S(r,f)+S(r,g)\\&\leq&\bigg\{2\Gamma_2+\frac{\Gamma_1}{2}-\frac{1}{2}\chi_0^{r-1}-2\mu_0^{r-1}+7-\frac{9}{2} \min\{\Theta(\infty,f),\Theta(\infty,g)\}+2\epsilon\bigg\}\{T(r,f)+T(r,g)\}\\&&+S(r,f)+ S(r,g),\eeas
which contradicts (\ref{t1.2}).\par 
Next let $l=0$. Then using {\it Lemmas \ref{l1}, \ref{l5} and \ref{l7}}, we have for $\epsilon>0$ 
\bea \label{et1.8} && rT(r, f)\\ &\leq& N_2(r,0;F)+ N_2(r,0;G)+ 2\ol N(r,0;F)+ \ol N(r,0; G) + N_2(r,\infty;F)\nonumber \\&& +  N_2(r,\infty;G)+ 2 \ol N(r,\infty;F)+ \ol N(r, \infty; G)+ S(r,F) + S(r,G)\nonumber\\
&\leq& (3+\chi_0^{r-1})\ol N(r,0;f)+(2+\chi_0^{r-1})\ol N(r,0;g)+ \bigg(\Gamma_2+2\Gamma_1-3\chi_0^{r-1}-\mu_0^{r-1}\bigg)T(r,f)\nonumber\\&&+\bigg(\Gamma_2+\Gamma_1-2\chi_0^{r-1}-\mu_0^{r-1}\bigg)T(r,g)+ 4 \ol N(r, \infty;f) + 3 \ol N(r,\infty;g)+ S(r,f)+ S(r,g)\nonumber\\
&\leq&\bigg(\Gamma_2+2\Gamma_1-2\chi_0^{r-1}-\mu_0^{r-1}+3\bigg)T(r,f)+\bigg(\Gamma_2+\Gamma_1-\chi_0^{r-1}-\mu_0^{r-1}+2\bigg)T(r,g)\nonumber\\&&+ 4 \ol N(r, \infty;f) + 3 \ol N(r,\infty;g)+ S(r,f)+ S(r,g).\nonumber \eea 
Similarly, 
\bea \label{et1.9}  &&rT(r, g)\\&\leq&\bigg(\Gamma_2+2\Gamma_1-2\chi_0^{r-1}-\mu_0^{r-1}+3\bigg)T(r,g)+\bigg(\Gamma_2+\Gamma_1-\chi_0^{r-1}-\mu_0^{r-1}+2\bigg)T(r,f)\nonumber\\&&+ 4 \ol N(r, \infty;g) + 3 \ol N(r,\infty;f)+ S(r,f)+ S(r,g).\nonumber \eea
Combining (\ref{et1.8}) and (\ref{et1.9}) we get
\beas&&r\{T(r,f)+ T(r,g)\}\\&\leq& \bigg\{2\Gamma_2+3\Gamma_1-3\chi_0^{r-1}-2\mu_0^{r-1}+12-7\min\{\Theta(\infty,f), \Theta(\infty,g)\}+ 2\epsilon\bigg\}\{T(r,f)+T(r,g)\}\nonumber\\&&+ S(r,f)+ S(r,g), \eeas
which contradicts (\ref{t1.3}).\par 
{\bf \ul {Subcase 2.2}} Let $H \equiv 0$. Then $F$, $G$ share $(1,\infty)$.\par
So by {\it Lemma \ref{l6}}, we get $T(r,G)= T(r,F)+ O(1)$.\par 
Now in view of {\it Lemmas \ref{l1} and \ref{l7}}, we see that for $\epsilon>0$\beas &&\ol N(r,0;F)+ \ol N(r,0;G)+ \ol N(r,\infty;F) + \ol N(r,\infty;G)\\&\leq& \ol N(r,0;f)+ \ol N(r,0;g)+(\Gamma_1-\chi_0^{r-1})\{T(r,f)+T(r,g)\}\\&&+ \ol N(r,\infty;f)+\ol N(r,\infty;g)+ S(r,f)+S(r,g)\\ &\leq& \{2\Gamma_1-2\chi_0^{r-1}+ 4-\Theta(\infty,f)- \Theta(\infty,g)+ \epsilon\}T(r)+S(r)\\ &\leq& \frac{2\Gamma_1-2\chi_0^{r-1}+ 4-\Theta(\infty,f)- \Theta(\infty,g)+ \epsilon}{r}T_1(r)+S(r), \eeas where $T_1(r) = \max\{T(r,F), T(r,G)\}$.
As $r>2\Gamma_1-2\chi_0^{r-1}+4-\Theta(\infty,f)- \Theta(\infty,g)$, using {\it Lemma \ref{l6}} we obtain either $F\equiv G$ or $F \cdot G \equiv 1$. Therefore by the similar argument as in {\it Case 1}, we get $f \equiv g$. \par  
\end{proof}
\begin{proof} [\bf {Proof of Corollary \ref{c1}}]
Since $f$, $g$ share $(S_1,l)$, we see that $F$, $G$ share $(1,l)$.\par
We consider the following cases.\par 
{\bf \ul{Case 1.}} Let $l=2$. \par
Putting $\Theta(\infty,f)= \Theta(\infty,g)=1$ and proceeding in the line of the proof of {\it Case 1} in {\it Theorem \ref{t1}}, we get $r\leq 2\Gamma_2-2\mu_0^{r-1}+2$, which contradicts (\ref{c1.1}). Hence either $F\equiv G$ or $F \cdot G \equiv 1$.\par 
We see that as $f$, $g$ $\in \mathcal{M}_1(\mathbb{C})$, hence $F$, $G$ $\in \mathcal{M}_1(\mathbb{C})$. \par 
If possible let $F \cdot G \equiv 1$, i.e., $U(f) \cdot U(g)\equiv 1$. Now as done in \cite[p. 337]{AUSM_SahKar_18} we arrive at a contradiction and hence the case $F \cdot G \equiv 1$ cannot occur. 	
So $F \equiv G$. Therefore by the similar argument as in {\it Case 1} of the proof of {\it Theorem \ref{t1}}, we get $f \equiv g$. \par
{\bf \ul{Case 2.}} Let $0\leq l \leq 1$.\par 
 Putting $\Theta(\infty,f)= \Theta(\infty,g)=1$ and proceeding in the line of the proof of {\it Case 2} of {\it Theorem \ref{t1}}, we get $r \leq  2 \Gamma_2 +\frac{\Gamma_1}{2}-\frac{1}{2}\chi_0^{r-1}-2 \mu_0^{r-1}+\frac{5}{2}$ and $r \leq 2\Gamma_2+ 3\Gamma_1-3\chi_0^{r-1}-2\mu_0^{r-1}+5$ for $l=1$ and $l=0$ respectively which contradicts (\ref{c1.2}) and (\ref{c1.3}). Hence for both $l=1$ and $l=0$ we have either $F\equiv G$ or $F \cdot G\equiv 1$. Next following the same way as done in {\it Case 1} of this corollary, the conclusion can be obtained.
\end{proof}
\begin{proof} [\bf {Proof of Theorem \ref{t2}}]
{\bf \ul{Case 1.}} Let $H\not\equiv 0$.
Then using {\it Lemma \ref{l8}}, we see that for $\epsilon>0$ \beas \bigg\{r-2\Gamma_1+2\chi_0^{r-1}-7+4\min\{\Theta(\infty,f),\Theta(\infty,g)\}-2\epsilon\bigg\}\{T(r,f)+T(r,g)\}\leq S(r,f)+S(r,g),\eeas
which contradicts the assumption (\ref{t2.1}).\par 
{\bf \ul{Case 2.}} Let $H\equiv 0.$
Then as done in {\it Subcase 2.2} of {\it Theorem \ref{t1}}, 
we see that 
\beas \frac{\ol N(r,0;F)+ \ol N(r,0;G)+ \ol N(r,\infty;F) + \ol N(r,\infty;G)}{T_1(r)}<1 \eeas for $r>2\Gamma_1-2\chi_0^{r-1}+2$, which clearly holds from the condition (\ref{t2.1}). So using {\it Lemma \ref{l6}}, we get either $F\equiv G$ or $F \cdot G\equiv 1$.\par 
If possible let $F \cdot G \equiv 1$.\par
Since $f$, $g$ share $(0,0)$, it follows that $0$ is the Picard exceptional value of $f$ and $g$. 
Then in view of (\ref{t2.2}), and using the similar argument as in {\it Case 1} of the proof of {\it Theorem \ref{t1}}, we get a contradiction. Hence $F \cdot G \not \equiv 1.$ \par 
So, $F \equiv G$. Now as done in {\it Case 1} of {\it Theorem \ref{t1}} under the assumption of the condition (\ref{t2.3}) we obtain that $f$, $g$ share $(1,  \infty)$, $(c, \infty)$ and $(\infty, \infty)$. Again $F \equiv G$ implies $fQ(f)\equiv g Q(g)$. As $f$ and $g$ share $(0,0)$ we note that they share $(0, \infty)$. Hence by {\it Lemma \ref{l11}}, we get $f\equiv g$. 
\end{proof}
\begin{proof}[\bf {Proof of Corollary \ref{c2}}]
As $f$ and $g$ are transcendental meromorphic functions, so $\ol N(r,\infty;f)$ $= S(r,f)$ and $\ol N(r, \infty; g) = S(r,g)$,
which implies that $F \cdot G  \equiv 1$ is not possible under the condition (\ref{c2.2}). \par
The rest of the proof can be carried out in the line of proof of {\it Theorem \ref{t2}}.
\end{proof}
\section{Compliance with Ethical Standards}
{\bf Conflict of interest} The authors declare that there are no conflicts of interest regarding the publication of this paper.\par
{\bf Human/animals participants} The authors declare that there is no research involving human participants and/or animals in the contained of this paper. 

\end{document}